\thanks{AMS Subject Classifications: 49K21, 49J99, 49N60, 34B10, 31B10, 45G15}
\newtheorem{thm}{Theorem}[section]
\newtheorem{cor}[thm]{Corollary}
\newtheorem{lem}[thm]{Lemma}
\newtheorem{prop}[thm]{Proposition}
\theoremstyle{definition}
\theoremstyle{remark}
\newtheorem{rem}[thm]{Remark}
\numberwithin{equation}{section}
\newcommand{\ds}{\displaystyle}
\newcommand{\mc}{\mathcal}
\newcommand{\Int}{\displaystyle\int}
\newcommand{\re}{\mathbb R}
\newcommand{\na}{\mathbb N}
\newcommand{\lp}{\left(}
\newcommand{\rp}{\right)}
\newcommand{\ls}{\left[}
\newcommand{\rs}{\right]}
\newcommand{\lc}{\left\{}
\newcommand{\rc}{\right\}}
\newcommand{\eps}{\varepsilon}
\newcommand{\dd}{\,\mathrm{d}}
\newcommand{\dom}{\Omega}
\newcommand{\bnd}{\Gamma}
\newcommand{\wh}[1]{\widehat{#1}}
\newcommand{\bu}{\bm{u}}
\newcommand{\bv}{\bm{v}}
\newcommand{\bx}{\bm{x}}
\newcommand{\by}{\bm{y}}
\newcommand{\bz}{\bm{z}}
\newcommand{\vep}{\varepsilon}
\newcommand{\vph}{\varphi}
\def\loc{\operatorname{loc}}
\begin{document}
\title{Existence and regularity of minimizers for nonlocal energy functionals}
%\thanks{Accepted for publication: This is the date paper was accepted by one of Editors-in-Chief.} 
\thanks{The first author was supported by the NSF award DMS-1716790. The second author was supported by a Simons Grant and by the NSF award DMS-1716790. The third author was supported by NSF grant DMS-1211232 }
\date{}
\maketitle     
 
\vspace{ -1\baselineskip}

{\small
\begin{center}
 {\sc Mikil D. Foss, Petronela Radu, Cory Wright} \\
University of Nebraska-Lincoln, Lincoln, NE 68588 \\[10pt]
% (Submitted by: Marta Lewicka)  
\end{center}
}

\numberwithin{equation}{section}
\allowdisplaybreaks

 \begin{quote}
\footnotesize
{\bf Abstract.}  
 In this paper we consider minimizers for nonlocal energy functionals generalizing elastic energies that are connected with the theory of peridynamics \cite{Silling2000} or nonlocal diffusion models \cite{Rossi}. We derive nonlocal versions of the Euler-Lagrange equations under two sets of growth assumptions for the integrand. Existence of minimizers is shown for integrands with joint convexity (in the function and nonlocal gradient components). By using the convolution structure we show regularity of solutions for certain Euler-Lagrange equations. No growth assumptions are needed for the existence and regularity of minimizers results, in contrast with the classical theory.

\end{quote}
\section{Introduction}

In this paper we consider minimizers for nonlocal functionals of the form
\begin{equation}\label{NLF}
\mathcal{F}(\bu):=\int_\Omega \int_\Omega f(\bx,\by-\bx,\bu(\bx),\bu(\bx)-\bu(\by))\dd \bx\dd \by,
\end{equation}
where $\bu$ satisfies (possibly free) boundary conditions. For this problem we have several goals: first, we investigate the validity of the weak form of the Euler-Lagrange equations, then we follow follow with the study of existence, uniqueness, and regularity of minimizers for some particular integrands. Of special interest to us is the connection between nonlocal problems and their classical counterparts; this aspect is studied in a general context, but also illustrated through several examples that are motivated by well known problems in classical theory (such as semilinear elliptic systems).

Nonlocal equations have risen as prime candidates to model physical phenomena characterized by singular or discontinuous behavior. As such, they have been used in population models \cite{CF}, nonlocal diffusion \cite{Rossi}, dynamic fracture \cite{Silling2000}; see also the monograph \cite{BV} for additional motivation and physical models. Whereas differential operators capture ``deep" properties at a point for a given function, integral operators incorporate a cumulative perspective. Moreover, integral operators of an appropriate structure can be used to approximate differential operators as the radius for the horizon of the interaction kernel shrinks to zero on the space of sufficiently smooth functions \cite{FR, DuMengesha}. A critical feature (which is a major advantage for considering a problem in the nonlocal framework) is the fact that the solution of a nonlocal system requires only minimal assumptions, such as measurability, monotonicity, whereas for classical systems, Sobolev differentiability is often a vital requirement.

In the nonlocal framework energy functionals have been introduced where the place of pointwise gradients has been taken by ``two-point" nonlocal gradients. These nonlocal gradients are basically weighted differences of the solution, and as such, they require just basic integrability assumptions. Despite being endowed with a reduced topological structure, the nonlocal framework has benefitted from the development of analytical tools for establishing estimates and compactness. As an example, the direct method of calculus of variations becomes applicable as coercivity is provided by a nonlocal Poincar\'e inequality and convexity serves for providing lower semi-continuity. Other results (e.g. Dirichlet's principle, existence of minimizers, weighted mean-value theorem etc. \cite{FR,HR}) have contributed to show that the nonlocal theory is a low-regularity counterpart of the classical theory. The ultimate goal is to provide a complete picture for a nonlocal calculus of variations theory as a counterpart to classical results (for which \cite{Dac} serves as a great reference).  Finally, establishing the connection between local and nonlocal frameworks would aid in correctly identifying the regime for the appropriate mathematical model to be used in applications (from low-regularity to high-regularity) as well as the correct implementation of a transition zone between these formulations.

The first part of this paper is dedicated to establishing necessary conditions under which the Euler-Lagrange equations associated with nonlocal energy functionals hold. As in the classical case (see \cite{Dac}, for example) polynomial growth conditions on the integrand are needed. For existence of minimizers, however, we show that, in contrast with the classical theory, only coercivity is needed so the integrand does not need to satisfy any upper-bound restrictions. Finally, by taking advantage of the convolution structure of the nonlocal operator (which is not smoothing), we present regularity results for solutions of nonlinear Euler-Lagrange equations under mild assumptions for the nonlinearity. Note that in the classical setting a key tool in establishing regularity is using smoothing properties of the operator, which here are lacking.

\subsection{Literature review.} Several recent works \cite{BMP1, BMP, BP, Elbau, FG, Pedregal1997} have investigated calculus of variations problems in the nonlocal setting and the relationship to the classical framework. Relevant to our investigation is the paper \cite{BellidoMora} where the authors derive Euler-Lagrange equations and prove existence of minimizers for a nonlocal functional of the form
\begin{equation}\label{EC}
\mathcal{I}(\bu):=\int_\Omega\int_\Omega w(\bx-\by,\bu(\bx)-\bu(\by))\dd \by\dd \bx- \int_\Omega F(\bx,\bu(\bx))\dd \bx.
\end{equation}
Note that the functional $\mathcal{I}$ is a particular case of the functional $\mathcal{F}$ that we considered in \eqref{NLF} since the pointwise dependence on $\bx$ is given separately through the function $F$.

On the other hand, the paper \cite{Elbau} concerns a very general nonlocal functional of the form
\begin{equation}\label{NF}
\mathcal{E}(\bu):=\int_\Omega\int_\Omega w(\bx,\by,\bu(\bx),\bu(\by))\dd \by\dd \bx.
\end{equation}
The results of \cite{Elbau} include the derivation of a condition equivalent to the lower-semicontinuity of the functional \eqref{EC}. Recall that in the classical formulation of calculus of variations quasiconvexity  provides a sine qua non condition for lower-semicontinuity, under appropriate assumptions \cite{MCoV}. Very similar to quasiconvexity, however, the condition given in \cite{Elbau} is cumbersome to verify directly. To forego this difficulty, we provide an existence of minimizers theorem obtained under joint convexity assumptions, a much stronger assumption, which in turn is very easy to verify (see Theorem \ref{jc}). More details that show the relation between our results and existing literature are provided in the next subsection.

\subsection{Main results and significance}

Our contributions in this paper concern several aspects regarding the functional $\mc{F}$ of \eqref{NLF}:
\begin{itemize}
\item We derive and establish the validity of the Euler-Lagrange equations for the nonlocal energy functional $\mc{F}$ under very weak assumptions; see Theorem \ref{NELE}. Prior to this paper, the results of \cite[Theorem 8.3]{BellidoMora} established Euler-Lagrange equations for energy functionals  of the particular form given by \eqref{EC}, where dependence on $\bu(\bx)$ is allowed through a separate term, independent of the ``nonlocal gradient" term $\bu(\bx)-\bu(\by)$. The equations are derived here in the weak form as well as in the strong (pointwise) formulation; separately, we discuss the connections between the nonlocal (integral) and classical (differential) frameworks.
\item As previously mentioned, under joint convexity assumptions, we obtain existence of a unique minimizer for $\mc{F}$ in Theorem \ref{jc}. The striking difference here in comparison to the results of \cite{BMP1,BMP, FG, Elbau} is that we do not need {\it any} upper bounds for the integrand.
\item For minimizers that satisfy the Euler-Lagrange equations we obtain the regularity results of Theorem \ref{RoS}. To our knowledge these are some of the first regularity results obtained for nonlinear nonlocal problems, and they are obtained under quite general assumptions on the nonlinearity. In the particular case of power nonlinearities, no condition on the subcriticality of exponent is necessary, contrasting the classical case where the Sobolev embedding theorem dictates the regime of exponents for which regularity results hold.
\item Along the way (and also separately in section \ref{Ex}) we provide examples for which the general results apply, and also establish connections between the nonlocal and local theories.

\end{itemize}

As an additional remark, note that the functionals considered here do not fit the framework of \cite{Elbau} since we do not have pairwise-symmetry of the integrand.

\subsection{Organization of the paper} In the next section we introduce some standard (by now) nonlocal operators as well as some notation and spaces.  In section \ref{ELS}, we find appropriate growth conditions under which we derive the Euler-Lagrange equations for our nonlocal functional. Using a specific form of the functional, we also show some connections between the classical and nonlocal formulations.  In section \ref{EoM}, we find sufficient conditions for existence of minimizers of the nonlocal functional \eqref{NLF}.  In section \ref{RoM}, we establish regularity results for a specific Euler-Lagrange equation, $\mathcal{L}_{\bm{\mu}}[\bu](\bx)=f(\bx,\bu),$ where $\mc{L}_{\bm{\mu}}$ is a nonlocal Laplacian that will be introduced below.  Finally, in section \ref{Ex}, we take specific examples of functionals and apply our previous results to establish existence and regularity to certain nonlocal equations.

\section{Preliminaries}\label{prelim}

In this section we introduce notation that will be used throughout the paper. Let $\Omega, \Gamma \subseteq \re^n$ be open bounded domains with $\Gamma$ a collar around $\Omega$. Let $\bu=(u_1, u_2, ...., u_N), \bv=(v_1, v_2, ...., v_N)$ denote two general vectors in $\re^N$.
\begin{itemize}
\item Denote by $\otimes$ the standard tensor product with components
\[
(\bu \otimes \bv)_{ij} =u_iv_j, \quad i,j=1...N.
\]
\item For a set $A \subseteq \re^n$, denote
\begin{equation}\label{hatset}
\widehat{A}:=\{\by-\bx:\bx,\by \in A\}.
\end{equation}
\item For a function $\bm{\psi}:A \to \re^N$, $\widehat{\bm{\psi}}:A^2 \to \re^N$ is given by
\[
\widehat{\bm{\psi}}(\bx,\by)=\bm{\psi}(\by)-\bm{\psi}(\bx).
\]
\item If $\beta:(\Omega \cup \Gamma)\times \widehat{\Omega \cup \Gamma} \to \re$ is measurable, then define
\[
    L_{\beta}^p((\Omega \cup \Gamma)^2)\!
    :=\!
    \lc\bm{w}\!\!:\!(\Omega \cup \Gamma)^2 \to \mathbb{R}^N\,\!\!\!:\!\!\!\,
    \int_{(\Omega \cup \Gamma)^2} \!\!\!\!\!\!|\beta(\bx,\by-\bx)||\bm{w}(\bx,\by)|^{p}
        \dd \bx\dd \by\!<\!\infty\rc
\]
with the norm
\[
\|u\|^p_{L_{\beta}^p((\Omega \cup \Gamma)^2)}:=\int_{(\Omega \cup \Gamma)^2} |\beta(\bx,\by-\bx)||\bm{w}(\bx,\by)|^{p}
        \dd \bx\dd \by.
\]
\item Define
\[
    \mathcal{W}_{\beta}^p(\Omega\cup\Gamma)
    :=
    \lc\bu\in L^p(\Omega \cup \Gamma;\re^N)\,:\,
    \wh{\bu}(\bx,\by) \in L_{\beta}^p((\Omega \cup \Gamma)^2)\rc
\]
 and
 \[
    \mathcal{W}_{\beta,\Gamma'}^p(\Omega \cup \Gamma)
    :=
    \lc\bu\in \mc{W}_{\beta}^p(\Omega \cup \Gamma)\,:\,
    \bu=0 \text{ on } \Gamma\setminus\Gamma'\rc,
 \]
 where $\Gamma'\subset \Gamma$ (see Figure 1 below). Whenever $\Gamma'=\emptyset$ we have that $\bu=0 \text{ on } \Gamma$ and we write $\mathcal{W}_{\beta,\Gamma'}^p(\Omega \cup \Gamma)=:\mc{W}_{\beta,0}^p(\Omega \cup \Gamma)$.
\end{itemize}
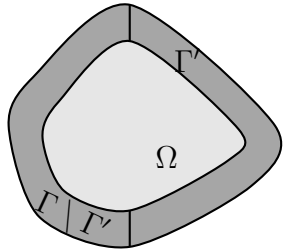
\begin{figure}[h]
\definecolor{grey1}{gray}{.9}
\definecolor{grey2}{gray}{.65}
\begin{tikzpicture}[scale=0.7]
 \draw[thick,scale=2.3,fill=grey2] plot [smooth cycle] coordinates {(1,0) (.25,.25) (0,1) (.2,1.4) (1,2)  (2.2,1) (2,0.5) };
  \draw[thick,scale=2.3,fill=grey1] plot [smooth cycle] coordinates {(1,.3) (.46,.46) (.28,.9) (.412,1.198) (1,1.7)  (1.9,.97) (1.81,0.712) };
  \node at (3,1.7) {\large$\Omega$};
  \node [rotate=-25] at (1.2,.55) {\large$\Gamma\setminus\Gamma'$};
  \node at (3.4,3.55) {\large$\Gamma'$};
  \draw [thick, scale=2.3](1,0) -- (1,.3);
  \draw [thick,scale=2.3] (1,2) -- (1,1.7);
\end{tikzpicture}
\caption{Domain $\Omega$ surrounded by a collar $\Gamma$ with boundary conditions imposed on $\Gamma\setminus \Gamma'$}
\end{figure}

Also, for notational convenience, given a map $(\bm{x},\bm{y})\mapsto\bm{g}(\bm{x},\bm{y}):\re^{n_1}\times\re^{n_2}\to\re^N$, the matrix (in $\re^{N\times n_1}$) of all first-order partial derivatives of $\bm{g}$, with respect to the first $n_1$ components, will be denoted by $\partial_{\bm{x}}\bm{g}(\bm{x},\bm{y})$ or just $\partial_{\bm{x}}\bm{g}$.

\begin{rem}\label{Lpb}
Note $L_{\beta}^p$ is the space of $p$-integrable functions with respect to the measure $\beta(\bx,\by-\bx)\dd \bx\dd \by$.  Thus, $\mathcal{W}^p_{\beta,\Gamma'}$ is a Banach space.
\end{rem}

 For $\bm{\alpha}:\Omega\cup\Gamma \times \Omega\cup\Gamma\rightarrow\re^N, \,\, \bu:\Omega\cup\Gamma\to\re^N,$ and $\bm{F}:\Omega\cup\Gamma \times \Omega\cup\Gamma\rightarrow\re^{N\times N}$ we define as in \cite{DGLz2013} the following generalized nonlocal operators:\\
\begin{enumerate}[(i)]
\item Nonlocal gradient
\begin{equation}\label{G}
\mathcal{G}_{\bm{\alpha}}[\bu](\bx,\by):=(\bu(\by)-\bu(\bx))\otimes\bm{\alpha}(\bx,\by), \quad \bx,\by \in \Omega.
\end{equation}
\item Nonlocal divergence
\begin{equation}\label{D}
 \mathcal{D}_{\bm{\alpha}}[\bm{F}](\bx):=\Int_{\Omega\cup\Gamma}(\bm{F}(\bx,\by)\bm{\alpha}(\bx,\by)-\bm{F}(\by,\bx)\bm{\alpha}(\by,\bx))\dd \by,\quad \bx \in \Omega.
 \end{equation}
\item Nonlocal Laplacian, for $\bm\alpha$ symmetric
\begin{equation}\label{L}
\mathcal{L}_{\bm{\alpha}^2}[\bu](\bx):=\mathcal{D}_{\bm{\alpha}}[\mathcal{G}_{\bm{\alpha}}[\bu]](\bx)=2\int_{\Omega \cup \Gamma} \widehat{\bu}(\bx,\by)|\bm\alpha(\bx,\by)|^2 \dd \by.
\end{equation}
\item Nonlocal $p$-Laplacian, for $\bm\alpha$ symmetric
\begin{align}\label{pL}
\begin{aligned}
\mathcal{L}_{\bm{\alpha}}^p[\bu](\bx)&:=\mc{D}_{\bm\alpha}[|\mc{G}_{\bm\alpha}[\bu]|^{p-1}\mc{G}_{\bm\alpha}[\bu]](\bx)\\
&=2\int_{\Omega \cup \Gamma}|\widehat{\bu}(\bx,\by)|^{p-2}\widehat{\bu}(\bx,\by)|\bm\alpha(\bx,\by)|^{p}\dd \by.
\end{aligned}
\end{align}

\end{enumerate}

\section{Euler-Lagrange Equations}\label{ELS}

In this section we aim to establish a nonlocal analogue to the Euler-Lagrange equations of the functional
\[
\mathcal{F}_{\loc}[\bu]=\int_{\Omega}f(\bx,\bu,\nabla \bu)\dd \bx.
\]
To this end, throughout this section, let $f:\dom\cup\bnd\times\widehat{\dom\cup\bnd}\times\re^N\times\re^N$ be given. We consider the functional $\mathcal{F}:L^1(\dom\cup\bnd;\re^N)$ defined by
\begin{equation}\label{Fu}
\mathcal{F}[\bu]:=\Int_{\Omega\cup\Gamma}\Int_{\Omega\cup\Gamma}f(\bx,\by-\bx,\bu(\bx), \bu(\by)-\bu(\bx))\dd \by\dd \bx.
\end{equation}

Before stating the result, we introduce our main assumptions for the integrand of  $\mathcal{F}$, which include two variations of a growth condition.  The underlying ideas for the proof of the necessity for the nonlocal Euler-Lagrange equations are similar to those presented in \cite{Dac} for local problems.

The growth conditions below are the analogues, for the nonlocal setting, of controllable growth conditions for local variational problems. Unlike the local setting, we do not consider controllable growth conditions where the growth with respect to $\bu$ is faster than the growth with respect to $\bz$. For local problems, a faster growth with respect to $\bu$ can be managed by taking advantage of the Sobolev embedding theorem, but there is no such embedding available in the nonlocal setting unless the kernel $\beta$ is strongly singular.

 %sIn \cite{Dac} there are three types of growth conditions considered for the integrand functions.  One of these allows the integrand to grow with respect to $\bu$ at a faster rate than with respect to $\nabla\bu$. The argument under this condition relies on the Sobolev imbedding theorem which has no analogue in the nonlocal setting when using the differences in $\bu$ rather that the gradient (see \cite[Remark 3.7]{HR}).

\begin{itemize}
\item[(R)]{\it Regularity Condition}:
\begin{itemize}
    \item[$\bullet$] $(\bx,\bz)\mapsto f(\bx,\bz,\bu,\bm{\xi})$ is measurable, for each $\bu,\bm{\xi}\in\re^N$;
    \item[$\bullet$] $(\bu,\bm{\xi})\mapsto f(\bx,\bz,\bu,\bm{\xi})$ is continuously differentiable, for \\
    a.e. $(\bx,\bz)\in\dom\cup\bnd\times\widehat{\dom\cup\bnd}$.
\end{itemize}
\item[(GI)] {\it Growth Condition I}: For each $R \ge 0$ there exists $a_R\in L^1(\Omega \cup \Gamma\times\widehat{\Omega \cup \Gamma})$ such that for every $\bu,\bm{\xi}\in\re^N$ satisfying $|\bu|,|\bm{\xi}| \le R$ we have
\[
    |\partial_{\bu}f(\bx,\by-\bx,\bu,\bm{\xi})|,
    |\partial_{\bm{\xi}} f(\bx,\by-\bx,\bu,\bm{\xi})|
    \le  a_R(\bx,\by-\bx),
\]
for a.e. $(\bx,\by)\in \left[\Omega \cup \Gamma\right]^2$.\\
\item[(GII)] {\it Growth Condition II}: With $p\in[1,\infty)$ given, there exist measurable $a:\Omega \cup \Gamma\times\widehat{\dom\cup\bnd}\to[0,\infty)$ and $\beta:\dom\cup\bnd\times\widehat{\dom\cup\bnd}\to\re^N$ such that
\[
    \bx\mapsto\int_{\Omega\cup\Gamma}
    a(\bx,\by-\bx)\dd\by\in L^{\frac{p}{p-1}}(\dom\cup\bnd),
\]
\[
    \bx \mapsto \int_{\Omega \cup \Gamma}
    |\beta(\bx,\by-\bx)|\dd\by\in L^\infty(\dom\cup\bnd)
\]
and
\begin{align*}
    |\partial_{\bu}f(\bx,\by-\bx,\bu,\bm{\xi})|
    &\le
    a(\bx,\by-\bx)+|\beta(\bx,\by-\bx)|(|\bu|^{p-1}+|\bm{\xi}|^{p-1})\\
    |\partial_{\bm{\xi}} f(\bx,\by-\bx,\bu,\bm{\xi})|
    &\le
    a(\bx,\by-\bx)+|\beta(\bx,\by-\bx)|(|\bu|^{p-1}+|\bm{\xi}|^{p-1})
\end{align*}
for a.e. $(\bx,\by) \in\left[\Omega \cup \Gamma\right]^2$ and every $(\bu,\bm{\xi}) \in \re^N \times \re^N$.
\end{itemize}

\begin{thm}[Nonlocal Euler-Lagrange equations]\label{NELE}
Let $p\in[1,\infty)$, a map $\bu_0\in\mathcal{W}^p_{\beta}(\dom\cup\bnd;\re^N)$ and a collar subset $\bnd'\subseteq\bnd$, be given. Define the admissible class
\[
    \mathcal{A}:=\left\{\bm{v}\in \mc{W}^p_{\beta}(\dom\cup\bnd;\re^N)\,:\,
        [\bm{v}-\bu_0]\in\mathcal{W}^p_{\beta,\bnd'}(\dom\cup\bnd;\re^N)\right\}
\]
Suppose that
\begin{itemize}
    \item $f$ has regularity (R) above
    \item $\bu\in\mathcal{A}$ is a minimizer for $\mathcal{F}$, defined in \eqref{Fu}, over $\mathcal{A}$.
\end{itemize}
 Then, if either
\begin{itemize}
\item[{\it (i)}]
$f$ also satisfies (GI) and $\bu \in \mathcal{A}\cap L^\infty$, in which case the set $\mathcal{V}:=\mathcal{W}^p_{\beta,\bnd'}\cap L^\infty$;
\end{itemize}
or
\begin{itemize}
\item[{\it(ii)}]
$f$ also satisfies (GII), in which case the set $\mathcal{V}:=\mathcal{W}^p_{\beta,\bnd'}(\Omega \cup \Gamma;\re^N)$;
\end{itemize}
then for all $\bm{\varphi}\in\mathcal{V}$, we find that
\begin{equation}\label{ELe2}
\int_{\Omega \cup \Gamma'}\int_{\Omega \cup \Gamma}\bm{\varphi}(\bx)\cdot\partial_{\bu}f(\bx,\by-\bx,\bu,\widehat{\bu})
-\widehat{\bm{\varphi}}(\bx,\by)\cdot\partial_{\bm{\xi}}f(\bx,\by-\bx,\bu,\widehat{\bu}) \dd \by\dd \bx=0,
\end{equation}
or, equivalently,
\begin{multline}\label{ELe}
    \int_{\Omega\cup\Gamma'}\bm{\varphi}(\bx)\cdot\int_{\Omega \cup \Gamma}
        \partial_{\bu}f(\bx,\by-\bx,\bu,\widehat{\bu})\\
            -\left[\partial_{\bm{\xi}}f(\bx,\by-\bx,\bu,\widehat{\bu})-
            \widetilde{\partial_{\bm{\xi}}f}(\bx,\bx-\by,\bu,\widehat{\bu})\right]
    \dd\by\dd\bx=0.
\end{multline}
Here
\[
    \widetilde{\partial_{\bm{\xi}}f}(\bx,\bm{z},\bu(\bx),\widehat{\bu}(\bx,\by)):=\partial_{\bm{\xi}} f(\by,\bm{z},\bu(\by),\widehat{\bu}(\by,\bx)),
 \]
   for each $(\bx,\bz)\in\dom\cup\bnd\times\widehat{\dom\cup\bnd}$.
\end{thm}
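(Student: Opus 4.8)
The plan is to follow the classical variational strategy adapted to the nonlocal setting, exploiting the fact that the first variation of $\mathcal{F}$ can be computed by differentiating under the integral sign once we control the difference quotients uniformly. Fix $\bm{\varphi}\in\mathcal{V}$ and, for $t\in\re$ small, set $g(t):=\mathcal{F}[\bu+t\bm{\varphi}]$. Since $\bu$ minimizes $\mathcal{F}$ over $\mathcal{A}$ and $\bu+t\bm{\varphi}\in\mathcal{A}$ for every $t$ (because $\bm{\varphi}\in\mathcal{W}^p_{\beta,\bnd'}$, so $(\bu+t\bm{\varphi})-\bu_0=[\bu-\bu_0]+t\bm{\varphi}$ still lies in $\mathcal{W}^p_{\beta,\bnd'}$), the function $g$ has a minimum at $t=0$, and the whole task reduces to showing $g$ is differentiable at $0$ with
\[
g'(0)=\int_{(\Omega\cup\Gamma)^2}\bigl[\bm{\varphi}(\bx)\cdot\partial_{\bu}f+\widehat{\bm{\varphi}}(\bx,\by)\cdot\partial_{\bm{\xi}}f\bigr]\dd\by\dd\bx,
\]
with the integrands evaluated at $(\bx,\by-\bx,\bu(\bx),\widehat{\bu}(\bx,\by))$. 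The identity $g'(0)=0$ is then exactly \eqref{ELe2} up to a sign, after one notes that the $\bm{\varphi}(\bx)\cdot\partial_{\bu}f$ term vanishes for $\bx\in\Gamma\setminus\Gamma'$ (since $\bm{\varphi}=0$ there) so the outer integral may be taken over $\Omega\cup\Gamma'$; the $\widehat{\bm{\varphi}}$ term cannot be similarly truncated because $\widehat{\bm{\varphi}}(\bx,\by)=\bm{\varphi}(\by)-\bm{\varphi}(\bx)$ need not vanish when $\bx\in\Gamma\setminus\Gamma'$ but $\by\in\Omega\cup\Gamma'$, which is precisely why \eqref{ELe2} keeps the full inner domain $\Omega\cup\Gamma$.

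For the differentiation step I would write the difference quotient
\[
\frac{f(\bx,\by-\bx,\bu(\bx)+t\bm{\varphi}(\bx),\widehat{\bu}(\bx,\by)+t\widehat{\bm{\varphi}}(\bx,\by))-f(\bx,\by-\bx,\bu(\bx),\widehat{\bu}(\bx,\by))}{t}
\]
and, using the mean value theorem in the $(\bu,\bm{\xi})$-variables (legitimate by (R)), express it as $\bm{\varphi}(\bx)\cdot\partial_{\bu}f(\cdots,\bu+\theta t\bm{\varphi},\widehat{\bu}+\theta t\widehat{\bm{\varphi}})+\widehat{\bm{\varphi}}(\bx,\by)\cdot\partial_{\bm{\xi}}f(\cdots,\bu+\theta t\bm{\varphi},\widehat{\bu}+\theta t\widehat{\bm{\varphi}})$ for some $\theta=\theta(\bx,\by,t)\in(0,1)$. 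As $t\to0$ this converges pointwise a.e.\ to the claimed integrand by continuity of the derivatives. To pass the limit through the integral I would invoke dominated convergence, and this is where the two cases split. Under (GI) with $\bu\in L^\infty$ and $\bm{\varphi}\in L^\infty$, for $|t|\le1$ the arguments $\bu+\theta t\bm{\varphi}$ and $\widehat{\bu}+\theta t\widehat{\bm{\varphi}}$ are bounded by some $R$ depending only on $\|\bu\|_\infty,\|\bm{\varphi}\|_\infty$, so the integrand is dominated by $a_R(\bx,\by-\bx)(|\bm{\varphi}(\bx)|+|\widehat{\bm{\varphi}}(\bx,\by)|)\le C\,a_R(\bx,\by-\bx)\in L^1$. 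Under (GII) I would instead bound $|\partial_{\bu}f|+|\partial_{\bm{\xi}}f|$ by $2a(\bx,\by-\bx)+2|\beta(\bx,\by-\bx)|(|\bu(\bx)|^{p-1}+|\bm{\varphi}(\bx)|^{p-1}+|\widehat{\bu}(\bx,\by)|^{p-1}+|\widehat{\bm{\varphi}}(\bx,\by)|^{p-1})$ (absorbing the $\theta t$ factors, $|t|\le1$, into constants via convexity of $r\mapsto r^{p-1}$ when $p\ge2$, or directly when $1\le p<2$), multiply by $|\bm{\varphi}(\bx)|+|\widehat{\bm{\varphi}}(\bx,\by)|$, and check the resulting majorant is in $L^1((\Omega\cup\Gamma)^2)$: the $a$-term pairs via Hölder with $\bx\mapsto\int a(\bx,\by-\bx)\dd\by\in L^{p/(p-1)}$ against $\bm{\varphi}\in L^p$, and each $\beta$-term, after integrating in $\by$ first and using $\int|\beta(\bx,\by-\bx)|\dd\by\in L^\infty$ together with $\bu,\bm{\varphi}\in L^p$ and $\widehat{\bu},\widehat{\bm{\varphi}}\in L^p_\beta$, is finite — this is the routine but slightly delicate bookkeeping that the growth hypotheses were designed to make work. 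I would also note en passant that these same bounds show $\mathcal{F}[\bu+t\bm{\varphi}]<\infty$, so $g$ is well-defined near $t=0$.

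Finally, for the equivalent form \eqref{ELe} I would take \eqref{ELe2} and symmetrize the $\widehat{\bm{\varphi}}\cdot\partial_{\bm{\xi}}f$ term by the change of variables $(\bx,\by)\mapsto(\by,\bx)$ applied to the piece containing $\bm{\varphi}(\by)$: writing $\widehat{\bm{\varphi}}(\bx,\by)=\bm{\varphi}(\by)-\bm{\varphi}(\bx)$, the $\bm{\varphi}(\by)$-part of $\int\int\widehat{\bm{\varphi}}\cdot\partial_{\bm{\xi}}f\dd\by\dd\bx$ becomes, after swapping names, $\int\int\bm{\varphi}(\bx)\cdot\partial_{\bm{\xi}}f(\by,\bx-\by,\bu(\by),\widehat{\bu}(\by,\bx))\dd\by\dd\bx=\int\int\bm{\varphi}(\bx)\cdot\widetilde{\partial_{\bm{\xi}}f}(\bx,\bx-\by,\bu(\bx),\widehat{\bu}(\bx,\by))\dd\by\dd\bx$ by the definition of $\widetilde{\partial_{\bm{\xi}}f}$ given in the statement, while the $-\bm{\varphi}(\bx)$-part contributes $-\int\int\bm{\varphi}(\bx)\cdot\partial_{\bm{\xi}}f(\bx,\by-\bx,\bu(\bx),\widehat{\bu}(\bx,\by))\dd\by\dd\bx$; combining with the $\bm{\varphi}(\bx)\cdot\partial_{\bu}f$ term and factoring out $\bm{\varphi}(\bx)$ yields \eqref{ELe}, with the outer integral over $\Omega\cup\Gamma'$ since $\bm{\varphi}$ vanishes off $\Omega\cup\Gamma'$. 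The main obstacle is the integrability verification of the dominating function in case (ii): one must be careful that the majorant genuinely lies in $L^1$ of the product space with respect to $\dd\by\dd\bx$ (not the weighted measure), which forces the Hölder pairing against $\bm{\varphi}\in L^p$ for the $a$-term and the Fubini/$L^\infty$ argument for the $\beta$-terms — exactly the two structural assumptions imposed in (GII) — and one should double-check that the change of variables used to pass to \eqref{ELe} is applied only after all integrands are known to be absolutely integrable.
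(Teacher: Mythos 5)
Your plan reproduces the paper's proof essentially verbatim: compute the G\^{a}teaux derivative of $\mathcal{F}$ along $t\mapsto\bu+t\bm{\varphi}$, justify passing the limit inside the double integral by dominated convergence under (GI) or (GII) (your sketch of the (GII) majorant — H\"older against $\bm{\varphi}\in L^p$ for the $a$-term, Fubini with the $L^\infty$ hypothesis for the $\beta$-terms — is precisely the paper's decomposition into the integrals $I_1,\dots,I_5$), and then obtain \eqref{ELe} from \eqref{ELe2} by expanding $\widehat{\bm{\varphi}}(\bx,\by)=\bm{\varphi}(\by)-\bm{\varphi}(\bx)$ and swapping $\bx\leftrightarrow\by$ in the $\bm{\varphi}(\by)$-piece, exactly as in the paper. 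The only difference is cosmetic: you represent the difference quotient via the mean value theorem with an auxiliary $\theta(\bx,\by,t)$, while the paper uses the equivalent fundamental-theorem-of-calculus form $h(\bx,\by,\eps)=\int_0^1\frac{d}{dt}f(\cdots+t\eps\cdots)\dd t$; both produce the same pointwise domination and neither creates measurability issues since the bounds are uniform in $\theta$, respectively $t\in[0,1]$.
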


\begin{proof}
With $\mathcal{V}$ the appropriate space of variations, depending on whether (i) or (ii) is satisfied, let $\bm{\varphi}\in\mathcal{V}$ be given. We will verify that the G\^ateaux derivative of \eqref{Fu} in the direction of $\bm{\varphi}$ exists.  For convenience, define $h:\dom\cup\bnd\times\dom\cup\bnd\times\re\to\re$ by
\begin{multline*}
    h(\bx,\by,\vep):=
    \int_0^1\Big\{
        \bm{\vph}(\bx)\cdot\partial_{\bu}f(\bx,\by-\bx,\bu+t\vep\bm{\vph},
            \widehat{\bu}+t\vep\widehat{\bm{\vph}})\\
        +\widehat{\bm{\vph}}(\bx,\by)
            \cdot\partial_{\bm{\xi}}f(\bx,\by-\bx,\bu+t\vep\bm{\vph},
            \widehat{\bu}+t\vep\widehat{\bm{\vph}})
    \Big\}\dd t
\end{multline*}
\begin{align*}
g_{\bu}(\bx,\by)&:=\partial_{\bu}f(\bx,\by-\bx,\bu(\bx)+t\eps \bm{\varphi}(\bx),\widehat{\bu}(\bx,\by)+t\eps\widehat{\bm{\varphi}}(\bx,\by));\\
g_{\bm{\xi}}(\bx,\by)&:=\partial_{\bm{\xi}} f(\bx,\by-\bx,\bu(\bx)+t\eps \bm{\varphi}(\bx),\widehat{\bu}(\bx,\by)+t\eps\widehat{\bm{\varphi}}(\bx,\by)).
\intertext{Thus}
    h(\bx,\by,\eps)&
    =\int_0^1
    \bm{\varphi}(\bx)\cdot g_{\bu}(\bx,\by)
    +\widehat{\bm{\varphi}}(\bx,\by)\cdot g_{\bm{\xi}}(\bx,\by) \dd t.
\end{align*}
Then we may write
\begin{align}
\label{preGat}
    &\frac{\mathcal{F}[\bu+\eps \bm{\varphi}]-\mathcal{F}[\bu]}{\eps}\\
\nonumber
    &=\frac{1}{\eps}\int_{\Omega \cup \Gamma}\int_{\Omega \cup \Gamma}
    \int_0^1 \frac{d}{dt}f(\bx,\by-\bx,\bu+\eps t
        \bm{\varphi},\widehat{\bu}+t\eps\widehat{\bm{\varphi}})
        \dd t\dd \by\dd \bx\\
\nonumber
    &=\int_{\Omega \cup \Gamma}\int_{\Omega \cup \Gamma}
        h(\bx,\by,\eps) \dd \by\dd \bx.
\end{align}
Since the derivatives of $f$ with respect to $\bm{u}$ and $\bm{\xi}$ are continuous, we have
\begin{multline*}
    \lim_{\vep\to0}h(\bx,\by,\vep)
    =\bm{\vph}(\bx)\cdot\partial_{\bu}f(\bx,\by-\bx,\bu,\wh{\bu})\\
        +\wh{\bm{\vph}}(\bx)\cdot\partial_{\bm{\xi}}f(\bx,\by-\bx,\bu,\wh{\bu}),
        \quad a.e. \ (\bx,\by)\in(\Omega \cup \Gamma)^2.
\end{multline*}
We will show that under either of the growth conditions, we have
\begin{equation}\label{DCT}
|h(\bx,\by,\eps)| \le \gamma(\bx,\by),  \ a.e. \  (\bx,\by,\eps) \in (\Omega \cup \Gamma)^2 \times [-1,1]
\end{equation}
for some $\gamma \in L^1((\Omega \cup \Gamma)^2)$. From here the existence of the G\^ateaux derivative of $\mathcal{F}$ follows from the Lebesgue dominated convergence theorem.\\

\emph{Growth Condition I}: Fix $(t,\vep)\in[0,1]\times[-1,1]$. For this part of the theorem, we are assuming that $\bu, \bm{\varphi} \in L^\infty$. Thus there exists an $R>0$ such that
\begin{align*}
    & |\bu(\bx)+t\eps \bm{\varphi}(\bx)|,\, |\bm{\varphi}| \le R,
        && \text{a.e.}  \ \bx \in \Omega \cup \Gamma,\\
    & |\widehat{\bu}(\bx,\by)+t\eps\widehat{\bm{\varphi}}(\bx,\by)| \le R,
        && \text{a.e.}  \ (\bx,\by) \in (\Omega \cup \Gamma)^2.
\end{align*}
The growth assumptions for $\bm{f}_{\bm{u}}$ and $\bm{f}_{\bm{\xi}}$ imply
\begin{align}\label{b1}
\begin{aligned}
    & |g_{\bu}(\bx,\by)||\bm{\varphi}(\bx)|
        \le a_R(\bx,\by-\bx)|\bm{\varphi}(\bx)|,\\
    & |g_{\bm{\xi}}(\bx,\by)||\widehat{\bm{\varphi}}(\bx,\by)|
        \le a_R(\bx,\by-\bx)|\widehat{\bm{\varphi}}(\bx,\by)|
\end{aligned}
\end{align}
for a.e. $\bx,\by \in\Omega \cup \Gamma$.  Since $\bu,\bm{\varphi} \in  L^{\infty}(\Omega \cup \Gamma;\re^N)$, H\"older's inequality gives
\begin{align}\label{b2}
\begin{aligned}
(\bx,\by) \mapsto &a_R(\bx,\by-\bx)|\bm{\varphi}(\bx)|\in L^{1}((\Omega \cup \Gamma)^2),  \\
(\bx,\by) \mapsto &a_R(\bx,\by-\bx)|\widehat{\bm{\varphi}}(\bx,\by)| \in L^1((\Omega \cup \Gamma)^2).
\end{aligned}
\end{align}
Since $(t,\vep)\in[0,1]\times[-1,1]$ were arbitrary, \eqref{b1} and \eqref{b2} yield \eqref{DCT}, and the existence of the G\^{a}teaux derivative is proved.\\

\emph{Growth Condition II}: Again, fix $(t,\vep)\in[0,1]\times[-1,1]$. For convenience, define $\zeta:\re\times\re\to\re$ by
\[
    \zeta(\bm{u},\bm{\xi}):=|\bm{u}+t\vep\bm{\xi}|^{p-1}.
\]
Under (GII), we have
\begin{align}\label{inG2}
\begin{aligned}
    &|g_{\bu}(\bx,\by)||\bm{\varphi}(\bx)|\\
    &\qquad
    \le
        \left\{a(\bx,\by-\bx)
        +|\beta(\bx,\by-\bx)|
            \left[\zeta(\bm{u}(\bx),\bm{\varphi}(\bx))\right.\right.\\
               &\left.\left. \qquad \quad +\zeta(\widehat{\bm{u}}(\bx,\by),\widehat{\bm{\varphi}}(\bx,\by))
            \right]\right\}
                |\bm{\varphi}(\bx)|,\\
    &|g_{\bm{\xi}}(\bx,\by)||\wh{\bm{\varphi}}(\bx),\by|\\
    &\qquad
    \le
        \left\{a(\bx,\by-\bx)
        +|\beta(\bx,\by-\bx)|
            \left[\zeta(\bm{u}(\bx),\bm{\varphi}(\bx))\right.\right.\\
                  &\left.\left. \qquad \quad +\zeta(\widehat{\bm{u}}(\bx,\by),\widehat{\bm{\varphi}}(\bx,\by))
            \right]\right\}
                |\wh{\bm{\varphi}}(\bx,\by)|.
\end{aligned}
\end{align}
To establish \eqref{DCT}, it is sufficient to argue that the upper bounds in \eqref{inG2} are pointwise uniformly bounded by an $L^1$ function, independent of $(t,\vep)\in[0,1]\times[-1,1]$ over $(\dom\cup\bnd)^2$.

Using Young's inequality, we see that
\[
    \zeta(\bm{u},\bm{\xi})|\bm{\xi}|
    \le 2^{p-1}\left(|\bm{u}|^{p-1}+|\bm{\xi}|^{p-1}\right)|\bm{\xi}|
    \le C\left(|\bm{u}|^p+|\bm{\xi}|^p\right).
\]
Thus
\begin{multline*}
    \left\{a(\bx,\by-\bx)
        +|\beta(\bx,\by-\bx)|
            \left[\zeta(\bm{u}(\bx),\bm{\varphi}(\bx))
                +\zeta(\widehat{\bm{u}}(\bx,\by),\widehat{\bm{\varphi}}(\bx,\by))
            \right]\right\}
                |\bm{\varphi}(\bx)|\\
    \le
    a(\bx,\by-\bx)|\bm{\varphi}(\bx)|
    +C|\beta(\bx,\by-\bx)|\left(|\bm{u}(\bx)|^p+|\bm{\varphi}(\bx)|^p\right)\\
    +C|\beta(\bx,\by-\bx)|\left(|\widehat{\bm{u}}(\bx,\by)|^{p-1}
        +|\widehat{\bm{\varphi}}(\bx,\by)|^{p-1}\right)|\bm{\varphi}(\bx)|
\end{multline*}
and
\begin{multline*}
    \hspace{-.3cm}\left\{a(\bx,\by-\bx)
        +|\beta(\bx,\by-\bx)|
            \left[\zeta(\bm{u}(\bx),\bm{\varphi}(\bx))
                +\zeta(\widehat{\bm{u}}(\bx,\by),\widehat{\bm{\varphi}}(\bx,\by))
            \right]\right\}
                |\wh{\bm{\varphi}}(\bx,\by)|\\
    \le
    a(\bx,\by-\bx)|\bm{\varphi}(\bx)|
    +C|\beta(\bx,\by-\bx)|\left(|\bm{u}(\bx)|^{p-1}+|\bm{\varphi}(\bx)|^{p-1}\right)
        |\widehat{\bm{\varphi}}(\bx,\by)|\\
    +C|\beta(\bx,\by-\bx)|\left(|\widehat{\bm{u}}(\bx,\by)|^p
        +|\widehat{\bm{\varphi}}(\bx,\by)|^p\right)
\end{multline*}
These upper bounds are independent of both $t$ and $\vep$, so it remains to show that they both belong to $L^1$.

Recall for part (ii) of the theorem, we assume that $\bu,\bm{\vph}\in W^{p}_{\beta}$. Also, we may put
\[
    A:=\left\|\int_{\Omega \cup \Gamma}
   \! a(\cdot,\by-\cdot)|\dd\by\right\|_{L^{\frac{p}{p-1}}(\dom\cup\bnd)}
    \:\!\text{and }\:
    B:=\left\|\int_{\Omega \cup \Gamma}
    \!|\beta(\cdot,\by-\cdot)| \dd\by\right\|_{L^\infty(\dom\cup\bnd)}\!,
\]
with both $A,B<\infty$. We further define
\begin{align*}
    I_1
    &:=
    \int_{(\dom\cup\bnd)^2}
        a(\bx,\by-\bx)|\bm{\vph}(\bx)|\dd\bx\dd\by,\\
    I_2
    &:=
    \int_{(\dom\cup\bnd)^2}
        |\beta(\bx,\by-\bx)\left(|\bm{u}(\bx)|^p+|\bm{\varphi}(\bx)|^p\right)
        \dd\bx\dd\by,\\
    I_3
    &:=
    \int_{(\dom\cup\bnd)^2}
        |\beta(\bx,\by-\bx)|\left(|\wh{\bm{u}}(\bx,\by)|^{p-1}
            +|\wh{\bm{\vph}}(\bx,\by)|^{p-1}\right)
        |\bm{\vph}(\bx)|\dd\bx\dd\by\\
    I_4
    &:=
    \int_{(\dom\cup\bnd)^2}
        |\beta(\bx,\by-\bx)|
        \left(|\bm{u}(\bx)|^{p-1}+|\bm{\vph}(\bx)|^{p-1}\right)
        |\wh{\bm{\vph}}(\bx,\by)|
        \dd\bx\dd\by,\\
\intertext{and}
     I_5
    &:=
    \int_{(\dom\cup\bnd)^2}
        |\beta(\bx,\by-\bx)|
        \left(|\wh{\bm{u}}(\bx,\by)|^p+|\wh{\bm{\varphi}}(\bx,\by)|^p\right)\dd\bx\dd\by
\end{align*}

We will verify that $I_1,\dots,I_5$ are each bounded. For $I_1$ and $I_2$, Fubini's theorem and then H\"older's inequality provides
\[
    I_1=\int_{\dom\cup\bnd}|\bm{\vph(\bx)}|
        \ls\int_{\dom\cup\bnd}a(\bx,\by-\bx)\dd\by\rs\dd\bx
    \le A\|\bm{\vph}\|_{L^p}
\]
and similarly
\begin{align*}
    I_2
    &=
    \int_{\dom\cup\bnd}
        \left(|\bm{u}(\bx)|^p+|\bm{\varphi}(\bx)|^p\right)
        \ls\int_{\dom\cup\bnd}|\beta(\bx,\by-\bx)|\dd\by\rs\dd\bx\\
    &\le
    CB\lp\|\bu\|_{L^p}^p
        +\|\bm{\vph}\|_{L^p}^p\rp,
\end{align*}
where $C<\infty$ is independent of both $t$ and $\vep$. To bound $I_3$, we first reorganize the integrand and then apply Fubini's theorem and H\"{o}lder's inequality (twice):
\begin{align*}
    I_3
    &=
    \int_{(\dom\cup\bnd)^2}
        \ls|\beta(\bx,\by-\bx)|^{\frac{1}{p}}|\bm{\vph}(\bx)|\rs\\
    &\hspace*{1in}\times
        \ls|\beta(\bx,\by-\bx)|^{\frac{p-1}{p}}
            \left(|\wh{\bm{u}}(\bx,\by)|^{p-1}
            +|\wh{\bm{\vph}}(\bx,\by)|^{p-1}\right)\rs\dd\by\dd\bx\\
    &\le
    C\int_{\dom\cup\bnd}
        |\bm{\vph}(\bx)|
        \ls\int_{\dom\cup\bnd}|\beta(\bx,\by-\bx)|\dd\by\rs^{\frac{1}{p}}\\
    &\hspace*{1in}\times
        \ls\int_{\dom\cup\bnd}|\beta(\bx,\by-\bx)|
            \left(|\wh{\bm{u}}(\bx,\by)|^p
            +|\wh{\bm{\vph}}(\bx,\by)|^p\right)\dd\by
        \rs^{\frac{p-1}{p}}\dd\bx\\
    &\le
    C B\|\bm{\vph}\|_{L^p}
        \ls\int_{(\dom\cup\bnd)^2}|\beta(\bx,\by-\bx)|
            \lp|\wh{\bu}(\bx,\by)|^p+|\wh{\bm{\vph}}(\bx,\by)|^p\rp\dd\by\dd\bx
        \rs^{\frac{p-1}{p}}.
\end{align*}
This last quantity is uniformly bounded, with respect to $(t,\vep)\in[0,1]\times[-1,1]$, since $\bu,\bm{\vph}\in\mathcal{W}^p_{\beta}\subseteq L^p_{\beta}$. Thus $I_1$, $I_2$, and $I_3$ are each uniformly bounded. The argument for $I_4$ is similar to the one used for $I_3$. The bound for $I_5$ is immediate from the assumption that $\bu,\bm{\vph}\in\mathcal{W}^p_{\beta}$. Hence $I_1,\dots, I_5$ are all bounded

Returning to \eqref{inG2}, we conclude that there is a $\gamma\in L^1((\dom\cup\bnd)^2)$ such that \eqref{DCT} holds. Thus the G\^{a}teaux derivative of $\mathcal{F}$ also exists under the growth assumption (GII). Passing to the limit as $\vep\to0^+$ and then as $\vep\to0^-$ and using the fact that $\bu$ is a minimizer for $\mathcal{F}$ over $\mathcal{A}$, we obtain \eqref{ELe2}. By expanding the second integrand in \eqref{ELe2}, using the property that $\bm{\vph}\in\mathcal{V}$ implies $\bm{\vph}=\bm{0}$ on $\bnd\setminus\bnd'$, and changing the order of integration, we can produce \eqref{ELe} as follows:
\begin{align*}
    0&\!=\!\int_{\dom\cup\bnd'}\!\!\int_{\dom\cup\bnd} \!\!\!\bm{\varphi}(\bx)\cdot\partial_{\bu}f(\bx,\by-\bx,\bu,\widehat{\bu})-
        \wh{\bm{\vph}}(\bx,\by)
        \cdot\partial_{\bm{\xi}}f(\bx,\by-\bx,\bu,\wh{\bu})\dd\by\dd\bx\\
    &=
    \int_{\dom\cup\bnd'}\int_{\dom\cup\bnd}  \bm{\varphi}(\bx)\cdot\partial_{\bu}f(\bx,\by-\bx,\bu,\widehat{\bu}) -
        \bm{\vph}(\by)
        \cdot\partial_{\bm{\xi}}f(\bx,\by-\bx,\bu,\wh{\bu})\dd\by\dd\bx\\
    & \hspace*{.8in}
    +\int_{\dom\cup\bnd'}\int_{\dom\cup\bnd}
        \bm{\vph}(\bx)
        \cdot\partial_{\bm{\xi}}f(\bx,\by-\bx,\bu,\wh{\bu})\dd\by\dd\bx\\
    &=
    \int_{\dom\cup\bnd'}  \!\!\!\bm{\varphi}(\bx)\cdot\partial_{\bu}f(\bx,\by-\bx,\bu,\widehat{\bu}) -  \bm{\vph}(\by)\cdot
        \ls\int_{\dom\cup\bnd'}
           \!\!\! \partial_{\bm{\xi}}f(\bx,\by-\bx,\bu,\wh{\bu})\dd\bx
        \rs\!\!\!\dd\by\\
    &\hspace*{.8in}
    -\int_{\dom\cup\bnd'}\bm{\vph}(\bx)\cdot
        \ls\int_{\dom\cup\bnd}
        \partial_{\bm{\xi}}f(\bx,\by-\bx,\bu,\wh{\bu})\dd\by
        \rs\dd\bx
        \end{align*}
        \begin{align*}
        &=\int_{\Omega\cup\Gamma'}\bm{\varphi}(\bx)\cdot\int_{\Omega \cup \Gamma}
        \partial_{\bu}f(\bx,\by-\bx,\bu,\widehat{\bu})\\
           &\hspace*{.8in}
   -\left[\partial_{\bm{\xi}}f(\bx,\by-\bx,\bu,\widehat{\bu})-
            \widetilde{\partial_{\bm{\xi}}f}(\bx,\bx-\by,\bu,\widehat{\bu})\right]
    \dd\by\dd\bx.
\end{align*}

\end{proof}

\begin{cor}\label{sEL}[Strong nonlocal Euler-Lagrange equations]
If $f$ satisfies Theorem 3.1
%and $\ds\int_{\Omega \cup \Gamma} f_{\bu}(\cdot,\by)-(f_{\bm{\xi}}(\cdot,\by)-\tilde{f}_{\bm{\xi}}(\cdot,\by)) \dd {\color{red} \bx?}$ {\color{red} is in the dual space of the appropriate $\bm{\varphi}$},
then we obtain the strong form of the Euler-Lagrange equations as given by the system of integral equations
\[
\int_{\Omega \cup\Gamma} f_{\bu}(\bx,\by)-(f_{\bm{\xi}}(\bx,\by)-\tilde{f}_{\bm{\xi}}(\bx,\by))\dd \by=0, \quad \forall \bx \in \Omega \cup\Gamma'.
\]
\end{cor}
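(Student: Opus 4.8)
The plan is to pass from the weak Euler--Lagrange equation \eqref{ELe} to its pointwise form by invoking the fundamental lemma of the calculus of variations. Write $\bm{G}$ for the $\by$-integrated expression inside the outer integral of \eqref{ELe}, namely
\[
    \bm{G}(\bx):=\int_{\dom\cup\bnd}f_{\bu}(\bx,\by)-\bigl(f_{\bm{\xi}}(\bx,\by)-\tilde f_{\bm{\xi}}(\bx,\by)\bigr)\dd\by,\qquad \bx\in\dom\cup\bnd',
\]
so that \eqref{ELe} becomes $\int_{\dom\cup\bnd'}\bm{\varphi}(\bx)\cdot\bm{G}(\bx)\dd\bx=0$ for every $\bm{\varphi}\in\mathcal{V}$, and the assertion of the corollary is exactly $\bm{G}=\bm{0}$ a.e.\ on $\dom\cup\bnd'$.

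First I would verify that $\bm{G}\in L^1(\dom\cup\bnd;\re^N)$. Under either (GI) (with $\bu\in L^\infty$) or (GII), the Fubini--H\"older estimates already carried out for the integrals $I_1,\dots,I_5$ in the proof of Theorem \ref{NELE} show that $(\bx,\by)\mapsto f_{\bu}(\bx,\by)$ and $(\bx,\by)\mapsto f_{\bm{\xi}}(\bx,\by)$ belong to $L^1((\dom\cup\bnd)^2)$; the relabeling $\bx\leftrightarrow\by$ then puts $(\bx,\by)\mapsto\tilde f_{\bm{\xi}}(\bx,\by)=\partial_{\bm{\xi}}f(\by,\bx-\by,\bu(\by),\wh{\bu}(\by,\bx))$ in $L^1((\dom\cup\bnd)^2)$ as well, and Fubini's theorem gives $\bm{G}\in L^1(\dom\cup\bnd;\re^N)$. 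Next I would note that every $\bm{\varphi}\in C^\infty_c(\dom\cup\bnd';\re^N)$ is an admissible variation, i.e.\ lies in $\mathcal{V}$ whether (i) or (ii) holds: such a $\bm{\varphi}$ is bounded, supported away from $\bnd\setminus\bnd'$ (hence vanishes there), lies in $L^p(\dom\cup\bnd;\re^N)$, and
\[
    \|\wh{\bm{\varphi}}\|_{L^p_\beta((\dom\cup\bnd)^2)}^p
    \le (2\|\bm{\varphi}\|_{L^\infty})^p\int_{\dom\cup\bnd}\!\Bigl(\int_{\dom\cup\bnd}|\beta(\bx,\by-\bx)|\dd\by\Bigr)\dd\bx
    \le (2\|\bm{\varphi}\|_{L^\infty})^p\, B\,|\dom\cup\bnd|<\infty,
\]
with $B$ as in the proof of Theorem \ref{NELE}, so $\bm{\varphi}\in\mathcal{W}^p_{\beta,\bnd'}$ (and trivially $\bm{\varphi}\in\mathcal{W}^p_{\beta,\bnd'}\cap L^\infty$).

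Restricting the test functions in \eqref{ELe} to $C^\infty_c(\dom\cup\bnd';\re^N)$ and applying the fundamental lemma componentwise (test with $\bm{\varphi}=\phi\,\bm{e}_i$, $\phi\in C^\infty_c(\dom\cup\bnd')$ scalar) to the integrable function $\bm{G}$ on the open set $\dom\cup\bnd'$ then forces $\bm{G}(\bx)=\bm{0}$ for a.e.\ $\bx\in\dom\cup\bnd'$, which is precisely the stated system of integral equations. I do not anticipate a genuine difficulty here; the two points that need a word of care are the $L^1$-integrability of $\bm{G}$ --- which is exactly what the growth hypotheses of Theorem \ref{NELE} were set up to deliver --- and the admissibility of $C^\infty_c$ test functions, and if $\bnd'$ is not relatively open one simply works on its interior (or reads the conclusion as holding for a.e.\ $\bx$, rather than every $\bx$, in $\dom\cup\bnd'$).
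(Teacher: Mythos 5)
Your argument is exactly the one the paper leaves implicit: the corollary is stated without proof, and the passage from the weak identity \eqref{ELe} to the pointwise statement is precisely an application of the fundamental lemma of the calculus of variations, justified by checking that the inner-integral quantity $\bm{G}$ is in $L^1(\Omega\cup\Gamma;\re^N)$ (which the growth estimates from the proof of Theorem~\ref{NELE}, run without the extra factor of $\bm{\varphi}$, do provide) and that $C_c^\infty(\Omega\cup\Gamma';\re^N)$ lies inside the space of admissible variations. You are also right to flag that the conclusion should really read ``for a.e.\ $\bx\in\Omega\cup\Gamma'$'' rather than ``for all $\bx$,'' a point the paper's statement glosses over.
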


\begin{thm} If $(\bu,\bm{\xi}) \to f(\bx,\bz,\bu,\bm{\xi})$ is convex and $\overline{\bu} \in \bu_0+ \mathcal{W}^p_{\beta,0}(\Omega \cup \Gamma)$ satisfies the weak form of the Euler-Lagrange equation \eqref{ELe2} or \eqref{ELe}, then $\overline{\bu}$ is a minimizer for \eqref{Fu} with respect to the admissible class $\bu_0+\mathcal{W}^p_{\beta,0}(\Omega \cup \Gamma)$.
\end{thm}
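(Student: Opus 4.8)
The plan is to exploit convexity of $f$ in the $(\bu,\bm\xi)$-variables to turn the Euler--Lagrange identity into a sub-gradient inequality, which is exactly what is needed for minimality. Let $\bv\in\bu_0+\mathcal{W}^p_{\beta,0}(\dom\cup\bnd)$ be arbitrary, and set $\bm\varphi:=\bv-\overline{\bu}$. Since both $\bv$ and $\overline{\bu}$ lie in the affine class $\bu_0+\mathcal{W}^p_{\beta,0}$, the difference $\bm\varphi$ belongs to $\mathcal{W}^p_{\beta,0}(\dom\cup\bnd)\subseteq\mathcal{V}$, so $\bm\varphi$ is an admissible variation in \eqref{ELe2}. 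First I would record the pointwise convexity inequality: for a.e. $(\bx,\by)\in(\dom\cup\bnd)^2$, writing $z$ for $\bx$-dependence suppressed,
\[
f(\bx,\by-\bx,\bv(\bx),\widehat{\bv}(\bx,\by))
-f(\bx,\by-\bx,\overline{\bu}(\bx),\widehat{\overline{\bu}}(\bx,\by))
\ge
\bm\varphi(\bx)\cdot\partial_{\bu}f(\bx,\by-\bx,\overline{\bu},\widehat{\overline{\bu}})
+\widehat{\bm\varphi}(\bx,\by)\cdot\partial_{\bm\xi}f(\bx,\by-\bx,\overline{\bu},\widehat{\overline{\bu}}),
\]
using $\widehat{\bv}-\widehat{\overline{\bu}}=\widehat{\bm\varphi}$ and the fact that for a $C^1$ convex function the graph lies above its tangent plane.

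Next I would integrate this inequality over $(\dom\cup\bnd)^2$. On the left this yields $\mathcal{F}[\bv]-\mathcal{F}[\overline{\bu}]$ directly from the definition \eqref{Fu}. On the right, the integral of the tangent-plane term is precisely the left-hand side of \eqref{ELe2} (with $\bm\varphi$ the chosen variation), provided the outer integration domain matches: here one uses that $\bm\varphi=\bm0$ on $\bnd\setminus\bnd'$ — in fact on all of $\bnd$ when $\bnd'=\emptyset$ — so that integrating $\bm\varphi(\bx)\cdot\partial_{\bu}f$ over $\dom\cup\bnd$ is the same as over $\dom\cup\bnd'$, and similarly one checks the $\partial_{\bm\xi}f$ term is unaffected by enlarging/shrinking the outer domain on the null set where $\bm\varphi$ vanishes. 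Since $\overline{\bu}$ satisfies \eqref{ELe2}, the right-hand side is zero. Hence $\mathcal{F}[\bv]-\mathcal{F}[\overline{\bu}]\ge0$ for every admissible $\bv$, which is the claim. (The equivalence of \eqref{ELe2} and \eqref{ELe} is already established in Theorem \ref{NELE}, so it does not matter which form we start from.)

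The main obstacle is integrability: one must ensure that the right-hand side of the pointwise convexity inequality is genuinely integrable on $(\dom\cup\bnd)^2$, so that ``integrate the inequality'' is legitimate and no $\infty-\infty$ arises. This is exactly the content of the estimates $I_1,\dots,I_5$ carried out in the proof of Theorem \ref{NELE} under (GII) (or the simpler $L^\infty$ bound under (GI)): with $\bm\varphi=\bv-\overline{\bu}\in\mathcal{W}^p_{\beta}$, the terms $\bm\varphi\cdot\partial_{\bu}f$ and $\widehat{\bm\varphi}\cdot\partial_{\bm\xi}f$ are dominated by an $L^1((\dom\cup\bnd)^2)$ function, so each side is a well-defined (possibly $+\infty$, but the tangent-plane side is finite) Lebesgue integral and the inequality is preserved. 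A secondary point to state carefully is that convexity of $f$ only needs to hold for a.e. fixed $(\bx,\bz)$, and the measurable-selection/Carathéodory structure from (R) guarantees that all the pointwise-in-$(\bx,\by)$ manipulations are jointly measurable; this is routine given the hypotheses already in force. With these two points handled, the proof is a two-line consequence of convexity and the weak Euler--Lagrange identity.
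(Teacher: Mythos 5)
Your proposal takes essentially the same route as the paper: choose $\bm\varphi$ to be the difference of $\bv$ and $\overline{\bu}$ (the paper writes $\bm\varphi:=\overline{\bu}-\bv$ and you write $\bm\varphi:=\bv-\overline{\bu}$, a cosmetic difference), apply the first-order convexity inequality pointwise at $(\overline{\bu},\widehat{\overline{\bu}})$, integrate over $(\Omega\cup\Gamma)^2$, and observe that the right-hand side vanishes by the Euler--Lagrange identity. Your explicit attention to integrability of the tangent-plane term is in fact more careful than the paper's proof, which asserts the inequality without comment.

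One slip worth flagging: in reconciling the outer domain of the Euler--Lagrange identity with that of $\mathcal{F}$, you refer to the set where $\bm\varphi$ vanishes as a ``null set''; the collar $\Gamma$ (or $\Gamma\setminus\Gamma'$) has positive measure, so this is not correct. Moreover, since $\widehat{\bm\varphi}(\bx,\by)=\bm\varphi(\by)-\bm\varphi(\bx)=\bm\varphi(\by)$ for $\bx\in\Gamma\setminus\Gamma'$, the vanishing of $\bm\varphi(\bx)$ does not annihilate the $\widehat{\bm\varphi}\cdot\partial_{\bm\xi}f$ contribution from that region, so shrinking the outer domain is not innocuous for that term. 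The cleaner resolution, and the one the paper's own proof of this theorem implicitly uses, is to work directly with the vanishing of the G\^ateaux derivative in the form
\[
\int_{(\Omega\cup\Gamma)^2}\bm\varphi(\bx)\cdot\partial_{\bu}f(\bx,\by-\bx,\overline{\bu},\widehat{\overline{\bu}})
+\widehat{\bm\varphi}(\bx,\by)\cdot\partial_{\bm{\xi}}f(\bx,\by-\bx,\overline{\bu},\widehat{\overline{\bu}})\dd\by\dd\bx=0,
\]
which is exactly what the limit computation in the proof of Theorem~\ref{NELE} yields and whose outer domain already matches that of $\mathcal{F}$; no domain adjustment is then needed. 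With that substitution, your argument is complete and matches the paper's.
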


\begin{proof}
For $\bv \in \bu_0+\mathcal{W}^p_{\beta}(\Omega \cup \Gamma)$ let $\bm{\varphi}:=\overline{\bu}-\bv \in \mathcal{W}^p_{\beta,0}(\Omega \cup \Gamma)$ (see Remark \ref{Lpb}), so $\eqref{ELe2}$ yields
\begin{align*}
\int_{\Omega \cup \Gamma}\int_{\Omega \cup \Gamma}&\bm{\varphi}(\bx) f_{\bu}(\bx,\by-\bx,\overline{\bu}(\bx),\overline{\bu}(\bx)-\overline{\bu}(\by))\\
&+\wh{\bm{\varphi}}(\bx,\by) f_{\bm{\xi}}(\bx,\by-\bx,\overline{\bu}(\bx),\overline{\bu}(\bx)-\overline{\bu}(\by)) \dd \by\dd \bx=0
\end{align*}
for the function $v$.  Then by the convexity assumption on $f$,

\begin{align*}
F[\overline{\bu}] -F[\bv]&\le \int_{\Omega \cup \Gamma}\int_{\Omega \cup \Gamma}\bm{\varphi}(\bx) f_{\bu}(\bx,\by-\bx,\overline{\bu}(\bx),\overline{\bu}(\bx)-\overline{\bu}(\by))\dd \by\dd \bx\\
&+ \int_{\Omega \cup \Gamma}\int_{\Omega \cup \Gamma}\wh{\bm{\varphi}}(\bx,\by)f_{\bm{\xi}}(\bx,\by-\bx,\overline{\bu}(\bx),\overline{\bu}(\bx)-\overline{\bu}(\by)) \dd \by\dd \bx\\
&=0.
\end{align*}
Therefore, $F[\overline{\bu}] \le F[\bu]$ for all $\bu \in \mathcal{W}^p_{\beta}(\Omega \cup \Gamma)$ and thus $\overline{\bu}$ is a minimizer of \eqref{Fu} with respect to the admissible function space $\mathcal{W}^p_{\beta}(\Omega \cup \Gamma)$.
\end{proof}

\subsection{Connection to classical Euler-Lagrange equations}\label{CEL}

Recall for a functional of the form
\begin{equation}\label{cEL}
\mathcal{F}[\bu]=\int_\Omega f(\bx,{\bu},\nabla {\bu}(\bx))\dd \bx,
\end{equation}
with $f:\re^n\times \re^N \times \re^{N\times n}$ for $\bu$ in some admissible class $\mc{A}$, and appropriate growth conditions on $f$, any minimizer of \eqref{cEL} satisfied the system of partial differential equations
\[
f_{u_i}(\bx,\bu(\bx),\nabla \bu(\bx))-\text{div}(f_{\bm{\xi_i}}(\bx,{\bu}(\bx),\nabla {\bu}(\bx)))=0,
\]
where $\bm\xi_i=(\xi_{ij})_{j=1}^N$ on $\Omega$.  If we allow a specific form of our nonlocal functional, one can see the similarities between the two conditions using nonlocal operators.  Let $g(\bx,\bu,\bm\eta): \re^n \times \re^N \times \re^{N \times N} \to \re$ satisfy one of the growth conditions above.  Let
\[
f(\bx,\bz,\bm{u},\bm{\xi})=g(\bx,\bm{u},\bm\xi \otimes \bm\mu(|\bz|)),
\]
 where $\bm{\mu}$ is an integrable kernel. Then, the weak form of the Euler-Lagrange equations for the functional
\[
\int_{\Omega \cup \Gamma}\int_{\Omega \cup \Gamma}g(\bx,\bm{u}(\bx),\widehat{\bu}(\bx,\by) \otimes\bm{\mu}(|\by-\bx|))\dd \bx\dd \by
\]
is given by
\begin{align}\label{CEL}
\int_{\Omega\cup\Gamma'} \bm\varphi(\bx) \int_{\Omega \cup \Gamma} g_{\bu}(\bx,\by)-[g_{\bm\eta}(\bx,\by)-g_{\bm\eta}(\by,\bx)]\bm\mu(|\by-\bx|)\dd \by\dd \bx=0,
\end{align}
for all appropriate $\bm{\varphi}$.  Using the definition for the nonlocal divergence, as stated in \cite{DGLz2013}, we can rewrite \eqref{CEL} as
\[
\int_{\Omega\cup \Gamma'} \bm\varphi(\bx)\int_{\Omega\cup\Gamma }g_{{\bu}}(\bx,\by)\dd \by\dd \bx-\int_{\Omega\cup\Gamma'} \bm\varphi(\bx) \mc{D}_{\bm\mu}[g_{\bm\eta}](\bx)\dd \bx=0.
\]
If $g$ has enough regularity, e.g. the integrand of $\eqref{CEL}$ is in the dual space of $\bm{\varphi}$, then for a.e. $\bx \in \Omega \cup \Gamma'$
\begin{align}
\begin{aligned}\label{CEQeq}
0&=\int_{\Omega \cup \Gamma} g_{\bu}(\bx,\by)-[g_{\bm\eta}(\bx,\by)-g_{\bm\eta}(\by,\bx)]\bm\mu(|\by-\bx|)\dd \by\\
&=\int_{\Omega \cup \Gamma} g_{\bu}(\bx,\by)\dd \by-\mc{D}_{\bm\mu}[g_{\bm\eta}](\bx).
\end{aligned}
\end{align}

This form is similar to the strong form of the Euler-Lagrange equations for \eqref{cEL} as shown above.  However, it is interesting to note a difference between the two forms. In the case of free boundary conditions ($|\Gamma'| \neq 0$), we have $\bu$ satisfying the Euler-Lagrange equations in a larger space $\Omega \cup \Gamma'$.  In the classical formulation, $\bu$ still satisfies the Euler-Lagrange equations on the interior, while Neumann (natural) boundary conditions could be collected from the integration by parts.

\section{Existence of minimizers}\label{EoM}

\begin{lem}\label{conlem} If $f$ is jointly convex in $\bu$ and $\bm{\xi}$ for almost every $\bx,\bz \in \Omega \cup \Gamma$, then $\mathcal{F}$ given by \eqref{Fu} is convex with respect to $\bu$.
\end{lem}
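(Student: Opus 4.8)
The plan is to reduce the claim to the elementary principle that a convex function precomposed with an affine map is convex, followed by monotonicity of the integral. Fix two maps $\bu,\bv$ (in $L^1(\Omega\cup\Gamma;\re^N)$, or in whichever admissible subclass is relevant) and $\lambda\in[0,1]$, and set $\bu_\lambda:=(1-\lambda)\bu+\lambda\bv$. The first — and only substantive — observation is that the arguments actually fed into $f$ depend affinely on the competitor: for every $(\bx,\by)\in(\Omega\cup\Gamma)^2$ one has
\[
\bu_\lambda(\bx)=(1-\lambda)\bu(\bx)+\lambda\bv(\bx),\qquad
\widehat{\bu_\lambda}(\bx,\by)=(1-\lambda)\widehat{\bu}(\bx,\by)+\lambda\widehat{\bv}(\bx,\by),
\]
the second identity because $\bm{\psi}\mapsto\widehat{\bm{\psi}}$ is linear.

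Next I would invoke the hypothesis. For a.e.\ fixed $(\bx,\by)\in(\Omega\cup\Gamma)^2$ the map $(\bm{u},\bm{\xi})\mapsto f(\bx,\by-\bx,\bm{u},\bm{\xi})$ is convex, so applying the convexity inequality to the two points $(\bu(\bx),\widehat{\bu}(\bx,\by))$ and $(\bv(\bx),\widehat{\bv}(\bx,\by))$ and using the displayed affine identities gives the pointwise bound
\[
f\big(\bx,\by-\bx,\bu_\lambda(\bx),\widehat{\bu_\lambda}(\bx,\by)\big)
\le (1-\lambda)\,f\big(\bx,\by-\bx,\bu(\bx),\widehat{\bu}(\bx,\by)\big)
+\lambda\,f\big(\bx,\by-\bx,\bv(\bx),\widehat{\bv}(\bx,\by)\big)
\]
for a.e.\ $(\bx,\by)$. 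Integrating this inequality over $(\Omega\cup\Gamma)^2$ (with the convention $\mathcal{F}\in(-\infty,+\infty]$, so that the conclusion is trivial whenever $\mathcal{F}[\bu]=+\infty$ or $\mathcal{F}[\bv]=+\infty$) and using linearity and monotonicity of the integral yields
\[
\mathcal{F}[\bu_\lambda]\le(1-\lambda)\mathcal{F}[\bu]+\lambda\mathcal{F}[\bv],
\]
which is exactly the convexity of $\mathcal{F}$.

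The proof has essentially no obstacle beyond bookkeeping: one must ensure that $(\bx,\by)\mapsto f(\bx,\by-\bx,\bu(\bx),\widehat{\bu}(\bx,\by))$ is measurable — which follows from a Carath\'eodory-type structure of $f$ (measurability in $(\bx,\bz)$ together with continuity in $(\bm{u},\bm{\xi})$, as in condition (R)) composed with the measurability of $\bu$ — and that $\mathcal{F}$ is well defined as an extended-real-valued functional (e.g.\ $f$ bounded below by an $L^1$ kernel, or simply restricting attention to the class on which $\mathcal{F}$ is finite), so that ``integrate the pointwise inequality'' is legitimate. Notably, no growth or coercivity assumptions on $f$ enter; joint convexity alone drives the argument. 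This lemma is then the input to the direct-method existence result (Theorem \ref{jc}): coercivity will be supplied separately by a nonlocal Poincar\'e inequality, while the convexity established here provides weak lower semicontinuity.
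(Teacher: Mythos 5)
Your argument is essentially identical to the paper's: both prove the pointwise convexity inequality for the integrand evaluated along the affine path and then integrate. The extra bookkeeping you note about measurability and well-definedness of $\mathcal{F}$ is a reasonable addition but does not change the nature of the argument.
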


\begin{proof}
Let $\bu,\bv \in L^p(\Omega \cup \Gamma; \re^N)$ and $t \in [0,1]$.  Then, since $f$ is jointly convex in the last two variables, we have
\begin{multline*}
    f\Big(\bx,\by-\bx,t\bu(\bx)+(1-t)\bv(\bx),t\bu(\by)+(1-t)\bv(\by)
    -(t\bu(\bx)+(1-t)\bv(\bx))\Big) \\
    \le tf\Big(\bx,\by-\bx, \bu(\bx), \bu(\by)-\bu(\bx)\Big)+(1-t)f\Big(\bx,\by-\bx,\bv(\bx),\bv(\by)-\bv(\bx)\Big)
\end{multline*}
for almost every $\bx,\bz \in \Omega \cup \Gamma$.  Therefore, after integration we obtain
\begin{align*}
\mathcal{F}\Big(t\bu+(1-t)\bv\Big)
&\le \int  tf\Big(\bx,\by-\bx, \bu(\bx),\bu(\by)-\bu(\bx)\Big)\\
&\hspace*{.7in}+(1-t)f\Big(\bx,\by-\bx,\bv(\bx),\bv(\by)-\bv(\bx)\Big)\dd\by\dd \bx\\
&=t\mathcal{F}(\bu)+(1-t)\mathcal{F}(\bv),
\end{align*}
as desired.
\end{proof}

\begin{thm}\label{jc} Suppose that there is a $\delta>0$ such that
\begin{equation}\label{Eq:posDelta}
    \bigcup_{\bx\in\Omega}B_\delta(\bx)\subseteq\Omega\cup\Gamma.
\end{equation}
Let $f=f(\bx,\bz,\bu,\bm{\xi}):(\Omega \cup \Gamma) \times \widehat{\Omega \cup \Gamma} \times \re^N \times \re^N \to \re$ be given. Assume that $f$ is
\begin{itemize}
	\item continuous in $\bu$ and $\bm{\xi}$,
	\item measurable in $\bx$ and $\by$,
	\item jointly convex with respect to $\bu$ and $\bm{\xi}$ for almost every $(\bx,\bz) \in (\Omega \cup \Gamma)\times \widehat{\Omega \cup \Gamma}$,
	\item coercive:
\begin{equation}\label{coereq}
f(\bx,\by-\bx,\bu,\bm{\xi})\ge \alpha_1(\bx,\by-\bx)|\bm\xi|^p+\alpha_2(\bx,\by-\bx)|\bu|^q+\alpha_3(\bx,\by-\bx).
\end{equation}
\end{itemize}
For the coercivity condition, we require
\begin{itemize}
\item $\alpha_1: \Omega \cup \Gamma \times \widehat{\Omega \cup \Gamma} \to \re^N$ is a measurable function for which there exists a constant $C_0>0$ such that for all $\bx\in\Omega$ and $\by \in \Omega\cup\Gamma$
\[
\alpha_1(\bx,\by-\bx)\ge C_0\chi_{B_\delta(\bx)},
\]
with $\delta>0$ identified in \eqref{Eq:posDelta},
\item $\alpha_2 \in L^{p/(p-q)}[(\Omega \cup \Gamma)^2]$,
\item $\alpha_3 \in L^{1}[(\Omega \cup \Gamma)^2]$, and $1 \le q <p$.
\end{itemize}

Consider the functional \eqref{Fu}, and let $\bu_0\in W^p_{\alpha_1}(\Omega\cup\Gamma)$ such that $\mathcal{F}(\bu_0)<\infty$ be given. Set
\[
	\mathcal{A}:=\left\{\bv\in W^p_{\alpha_1}(\Omega\cup\Gamma;\re^N)
        : [\bv-\bu_0]\in W^p_{\alpha_1,0}(\Omega\cup\Gamma;\re^N)\right\}.
\]
Then there is some $\bu\in A$ such that
\begin{equation}\label{infeq}
\mathcal{F}(\bu)=\inf\{\mathcal{F}(\bv): \bv \in \mathcal{A}\}.
\end{equation}
\end{thm}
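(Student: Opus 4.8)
The plan is to apply the direct method of the calculus of variations. First I would observe that the admissible class $\mathcal{A}$ is nonempty (it contains $\bu_0$) and that $\inf_{\mathcal{A}}\mathcal{F}$ is finite: it is bounded above by $\mathcal{F}(\bu_0)<\infty$, and bounded below because the coercivity bound \eqref{coereq}, together with $\alpha_2\in L^{p/(p-q)}$, $\alpha_3\in L^1$ and Young's inequality with exponents $p/q$ and $p/(p-q)$, gives
\[
    \mathcal{F}(\bv)\ge C_0\|\widehat{\bv}\|_{L^p_{\chi_{B_\delta}}}^p
        -C\|\alpha_2\|_{L^{p/(p-q)}}\|\bv\|_{L^p}^q-\|\alpha_3\|_{L^1}
\]
for every $\bv\in\mathcal{A}$. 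Let $(\bu_k)\subset\mathcal{A}$ be a minimizing sequence. The next step is to extract uniform bounds. From the displayed lower bound and the boundedness of $\mathcal{F}(\bu_k)$ one controls $\|\widehat{\bu_k}\|_{L^p_{\chi_{B_\delta}}}^p$ by $C(1+\|\bu_k\|_{L^p}^q)$ with $q<p$. To close this into a genuine bound I would invoke a nonlocal Poincar\'e inequality on $\mathcal{W}^p_{\alpha_1,0}$ applied to $\bu_k-\bu_0$ (this is exactly where hypothesis \eqref{Eq:posDelta} and the lower bound $\alpha_1\ge C_0\chi_{B_\delta(\bx)}$ are used — they guarantee the kernel is nondegenerate on a full $\delta$-neighborhood of $\Omega$, so the Poincar\'e constant is finite): this bounds $\|\bu_k-\bu_0\|_{L^p}$ by $C(1+\|\widehat{\bu_k}\|_{L^p_{\chi_{B_\delta}}})$, and combining with $q<p$ a Young-type absorption argument yields a uniform bound on $\|\bu_k\|_{L^p(\Omega\cup\Gamma)}+\|\widehat{\bu_k}\|_{L^p_{\alpha_1}((\Omega\cup\Gamma)^2)}$, i.e. a uniform bound in $\mathcal{W}^p_{\alpha_1}$.

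With this bound in hand, since $\mathcal{W}^p_{\alpha_1}$ is a reflexive Banach space for $p\in(1,\infty)$ (it is a closed subspace of $L^p(\Omega\cup\Gamma)\times L^p_{\alpha_1}((\Omega\cup\Gamma)^2)$ via $\bv\mapsto(\bv,\widehat{\bv})$, and both factors are reflexive), I would pass to a subsequence with $\bu_k\rightharpoonup\bu$ weakly in $L^p(\Omega\cup\Gamma)$ and $\widehat{\bu_k}\rightharpoonup\widehat{\bu}$ weakly in $L^p_{\alpha_1}((\Omega\cup\Gamma)^2)$ for some $\bu\in\mathcal{W}^p_{\alpha_1}$; the weak limit of $\widehat{\bu_k}$ is indeed $\widehat{\bu}$ because the map $\bv\mapsto\widehat{\bv}$ is linear and bounded, hence weak-weak continuous. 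To see $\bu\in\mathcal{A}$, note $\mathcal{W}^p_{\alpha_1,0}$ is a closed subspace, hence weakly closed, and $\bu_k-\bu_0\rightharpoonup\bu-\bu_0$ lies in it; the case $p=1$ would need a separate uniform-integrability argument, so I would state the theorem for $p\in(1,\infty)$ or supply that refinement.

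Finally I would invoke weak lower semicontinuity. By Lemma \ref{conlem}, $\mathcal{F}$ is convex on $L^p(\Omega\cup\Gamma)$; moreover $\mathcal{F}$ is strongly lower semicontinuous — indeed, along a strongly $L^p$-convergent sequence one has a subsequence converging a.e., the integrand is continuous in $(\bu,\bm{\xi})$ and bounded below by the fixed $L^1$-type minorant coming from \eqref{coereq}, so Fatou's lemma applies. A convex, strongly lower semicontinuous functional on a Banach space is weakly lower semicontinuous, hence
\[
    \mathcal{F}(\bu)\le\liminf_{k\to\infty}\mathcal{F}(\bu_k)=\inf_{\mathcal{A}}\mathcal{F},
\]
and since $\bu\in\mathcal{A}$ this forces equality, giving \eqref{infeq}. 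The main obstacle is the second step: converting the coercivity estimate into a genuine $\mathcal{W}^p_{\alpha_1}$ bound on the minimizing sequence, since the $|\bu|^q$ term has the "wrong sign" and $\alpha_2$ need not be bounded — this is precisely where the nonlocal Poincar\'e inequality (available by \eqref{Eq:posDelta} and the structural lower bound on $\alpha_1$) and a careful Young's-inequality absorption using $q<p$ are essential; everything else is a routine application of the direct method.
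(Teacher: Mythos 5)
Your proof is correct in its essentials and reaches the result by the direct method with the same key estimates as the paper (coercivity, the nonlocal Poincar\'e inequality triggered by \eqref{Eq:posDelta} and $\alpha_1\ge C_0\chi_{B_\delta(\bx)}$, and Young's absorption using $q<p$), but the lower-semicontinuity step is handled by a genuinely different route. The paper never invokes the abstract ``convex $+$ strongly l.s.c.\ $\Rightarrow$ weakly l.s.c.''\ theorem; instead, after extracting a weak $L^p$ limit $\overline{\bu}$ it applies Mazur's lemma explicitly to produce convex combinations $\overline{\bu}_\kappa$ that converge strongly and a.e., uses Fatou together with the uniform bound on $\|\widehat{\bu}_\nu\|_{L^p_{\alpha_1}}$ to conclude $\overline{\bu}\in W^p_{\alpha_1}$, and then uses the inequality $\mathcal{F}(\overline{\bu}_\kappa)\le\mathcal{F}(\widetilde{\bu}_\kappa)$ (a ``max over the tail'' element) plus Fatou again to finish; this keeps the whole argument elementary and makes visible exactly where convexity and the lack of upper growth are exploited. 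Your version instead observes that $\mathcal{W}^p_{\alpha_1}$ is reflexive (so the weak limit lives there), that $\mathcal{W}^p_{\alpha_1,0}$ is closed and convex hence weakly closed (so $\bu\in\mathcal{A}$), and then invokes strong l.s.c.\ of $\mathcal{F}$ plus the standard convexity-to-weak-l.s.c.\ upgrade; this is cleaner in spots (e.g.\ you get $\widehat{\bu_k}\rightharpoonup\widehat{\bu}$ in $L^p_{\alpha_1}$ for free, whereas the paper works to recover $\widehat{\overline{\bu}}\in L^p_{\alpha_1}$). Two small caveats: first, the $L^1$ minorant you need for Fatou in the strong l.s.c.\ step is not literally ``fixed'' by \eqref{coereq} since $\alpha_2|\bu_k|^q$ depends on $k$ and $\alpha_2$ may change sign --- you should pass to a further subsequence dominated by some $h\in L^p$ so that $|\alpha_2|\,h^q+|\alpha_3|$ is the actual $L^1$ dominating minorant; second, your worry about $p=1$ is moot, since $1\le q<p$ already forces $p>1$.
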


\begin{proof}

Define $m$ to be the infimum in \eqref{infeq}.  Then, since $\mathcal{F}(\bu_0)<\infty$ by assumption, $m<\infty$.  Also, by the coercivity condition \eqref{coereq} we have that $m>-\infty$ and so $m$ is finite.  Let $\{\bu_\nu\}$ be a minimizing sequence in $\mathcal{A}$.  Using \eqref{coereq}, for sufficiently large $\nu$ we have

\begin{align*}
    m+1
    &\ge
    \int_{(\Omega \cup \Gamma)^2} \Big[\alpha_1(\bx,\by-\bx)|\widehat{\bu}_\nu(\bx,\by)|^p
        -|\alpha_2(\bx,\by-\bx)||\bu_{\nu}(\bx)|^q\\
    &\hspace*{2in}\phantom{|\widehat{\bu}_\nu(\bx,\by)|^p}
        -|\alpha_3(\bx,\bx-\by)|\Big]\dd \bx\dd \by \\
    &=
    \|\widehat{\bu}_\nu\|_{L_{\alpha_1}^p[(\Omega \cup \Gamma)^2]}^p
    -|\Omega\cup\Gamma|^{\frac{q}{p}}
    \|\alpha_2\|_{L^\frac{p}{q-p}[(\Omega\cup\Gamma)]^2}\|\bu_\nu\|_{L^p(\Omega\cup\Gamma)}^q\\    
    &
    \hspace*{3.4in}-\|\alpha_3\|_{L^1[(\Omega\cup\Gamma)]^2}.
\end{align*}
Since $\bu_\nu\in\mathcal{A}$, we have $\bu_\nu(\bx)=\bu_0(\bx)$, for each $\bx\in\Gamma$. It follows that
\[
    \|\bu_\nu\|_{L^p(\Omega\cup\Gamma)}^q
    \le
    C_1\left(\|\bu_\nu\|_{L^p(\Omega)}^q+\|\bu_0\|_{L^p(\Gamma)}^q\right).
\]
Thus, with $C_2:=C_1|\Omega\cup\Gamma|^{\frac{q}{p}}\|\alpha_2\|_{L^\frac{p}{q-p}[(\Omega\cup\Gamma)]^2}$
and $C_3:=\|\alpha_3\|_{L^1[(\Omega\cup\Gamma)]^2}$,
\begin{equation}\label{Eq:Est41}
    \|\widehat{\bu}_\nu\|_{L^p_{\alpha_1}[(\Omega\cup\Gamma)^2]}^p
    -C_2\|\bu_\nu\|_{L^p(\Omega)}^q
    \le m+1+C_2\|\bu_0\|_{L^p(\Gamma)}^q+C_3.
\end{equation}
At this point, since $\alpha_1(\bx,\by-\bx)\ge C_0\chi_{B_\delta(\bx)}$ for each $\bx,\by\in\Omega\cup\Gamma$, we may apply the nonlocal Poincar\'e's inequality (see \cite[Lemma 3.5]{HR}) to find
\[
    \|\widehat{\bu}_{\nu}\|_{L^p_{\alpha_1}[(\Omega \cup \Gamma)^2]}^p \ge C_4\left(\|\bu_\nu\|_{L^p(\Omega)}^p-\|\bu_0\|_{L^p(\Gamma)}^p\right).
\]
Here $C_4>0$ depends on the structural assumptions in the problem. In particular, it depends on $C_0$ and $\delta$. Using this in \eqref{Eq:Est41} yields
\begin{equation}\label{Eq:Est42}
    C_4\|\bu_\nu\|_{L^p(\Omega)}^p-C_2\|\bu_\nu\|_{L^p(\Omega)}^q
    \le
    m+1+C_2\|\bu_0\|_{L^p(\Gamma)}^q+C_4\|\bu_0\|_{L^p(\Gamma)}^p+C_3.
\end{equation}
Since $q<p$, Young's inequality implies that for each $\vep>0$, there is a $C<\infty$ such that
\begin{equation}\label{Eq:YoungsBnd}
    \|\bu_\nu\|_{L^p(\Omega)}^q
    \le
    \varepsilon\|\bu_\nu\|_{L^p(\Omega)}^p+C.
\end{equation}
With $\varepsilon=\ds\frac{C_4}{2C_2}$, it follows from \eqref{Eq:Est42} that
\[
    \frac{1}{2}C_4\|\bu_\nu\|_{L^p(\Omega)}^p
    \le
    C_2\|\bu_0\|_{L^p(\Gamma)}^q+C_4\|\bu_0\|_{L^p(\Gamma)}^p+C_5,
\]
for some $C_5<\infty$. Hence
\begin{equation}\label{Eq:Est43}
    \frac{1}{2}C_4\|\bu_\nu\|_{L^p(\Omega\cup\Gamma)}^p
    \le \frac{3}{2}C_4\|\bu_0\|_{L^p(\Gamma)}^p+C_5.
\end{equation}
Then both $C_4$ and the upper bound is independent of $\nu$, and recall that $C_4>0$. Thus we can extract a (unrelabeled) subsequence $\{\bu_\nu\}$ such that $\bu_\nu \to \overline{\bu}$ weakly in $L^p(\Omega\cup\Gamma)$ for some $\overline{\bu}\in L^p(\Omega\cup\Gamma)$.

We next verify that $\overline{\bu}\in\mathcal{A}$. Clearly $\overline{\bu}=\bu_0$ on $\Gamma$, so we just need to show that $\overline{\bu}\in W^p_{\alpha_1}(\Omega\cup\Gamma)$. By Mazur's Lemma, there is a sequence of convex combinations of elements of $\{\bu_\nu\}$ that converge strongly to $\overline{\bu}$ in $L^p(\Omega\cup\Gamma)$. Thus we may produce $\{\overline{\bu}_\kappa\}_{\kappa=1}^\infty$ in $L^p(\Omega\cup\Gamma)$ such that
\[
    \overline{\bu}_\kappa=\sum_{\nu=\kappa}^{N_{\kappa}}\lambda_\nu^{({\kappa})}\bu_\nu
    \quad
    \text{ and }
    \quad
    \text{ $\overline{\bu}_\kappa\to \overline{\bu}$ strongly in $L^p(\Omega\cup\Gamma)$}
\]
where for all $\kappa\in\mathbb{N}$,
\[
    \{\lambda_\nu^{({\kappa})}\}_{\nu,\kappa=1}^\infty\in[0,1]
    \quad\text{ and }
    \sum_{\nu=\kappa}^{N_\kappa}\lambda_\nu^{({\kappa})}=1.
\]
Thus, after passing to another (unrelabeled) subsequence, $\overline{\bu}_\nu\to\overline{\bu}$ pointwise a.e. in $\Omega\cup\Gamma$. It follows that
\begin{multline*}
    \alpha_1(\bx,\by-\bx)\widehat{\overline{\bu}}_\kappa(\bx,\by)
    =\alpha_1(\bx,\by-\bx)\left[\overline{\bu}_\kappa(\by)
        -\overline{\bu}_\kappa(\bx)\right]\\
    \to
    \alpha_1(\bx,\by-\bx)\widehat{\overline{\bu}}(\bx,\by),
    \quad
    \text{a.e. }\bx,\by\in\Omega\cup\Gamma.
\end{multline*}
By Fatou's lemma
\[
    \|\widehat{\overline{\bu}}\|_{L^p_{\alpha_1}[(\Omega\cup\Gamma)^2]}^p
    \le
    \liminf_{\kappa\to\infty}
    \|\widehat{\overline{\bu}}_\kappa\|_{L^p_{\alpha_1}[(\Omega\cup\Gamma)^2]}^p.
\]
Since each $\overline{\bu}_\kappa$ is a convex combination of elements of $\{\bu_\nu\}$, we see that $\overline{\bu}_\kappa=\bu_0$ on $\Gamma$. Since $t\mapsto|t|^p$ is a convex function, we also find that
\begin{align*}
    \|\widehat{\overline{\bu}}_\kappa\|_{L_{\alpha_1}^p[(\Omega\cup\Gamma)^2]}^p
    &=
    \int_{(\Omega\cup\Gamma)^2}\alpha_1(\bx,\by-\bx)
        \left|\sum_{\nu={\kappa}}^{N_{\kappa}}\lambda_\nu^{({\kappa})}
            \widehat{\bu}_\nu\right|^p
        \dd\bx\dd\by\\
    &\le
    \sum_{\nu={\kappa}}^{N_{\kappa}}\lambda_\nu^{({\kappa})}
        \|\widehat{\bu}_\nu\|_{L_{\alpha_1}^p[(\Omega\cup\Gamma)^2]}^p.
\end{align*}
Next, from the bound in \eqref{Eq:Est41}, we conclude that
\[
    \|\widehat{\overline{\bu}}_\kappa\|_{L_{\alpha_1}^p[(\Omega\cup\Gamma)^2]}^p
    \le
    m+1+C_2\left(\|\bu_0\|_{L^p(\Gamma)}^q+\sum_{\nu=\kappa}^{N_{\kappa}}
        \lambda_\nu^{({\kappa})}
        \|\bu_\nu\|_{L^p(\Omega)}^q\right)
    +C_3.
\]
An application of Young's inequality, as in \eqref{Eq:YoungsBnd}, and the bound in \eqref{Eq:Est43} establishes an upper bound for $\|\widehat{\overline{\bu}}_\kappa\|_{L^p_{\alpha_1}[(\Omega\cup\Gamma)^2]}^p$ that is independent of $\kappa\in\mathbb{N}$. Hence $\overline{\bu}\in W^p_{\alpha_1}(\Omega\cup\Gamma)$. As already indicated, we also have that $\overline{\bu}=\bu_0$ on $\Gamma$. Thus $\overline{\bu}\in\mathcal{A}$, as claimed.

Finally, we verify that $\overline{\bu}$ minimizes $\mathcal{F}$ over $\mathcal{A}$. For each $\kappa\in\mathbb{N}$, denote by $\widetilde{\bu}_\kappa$ that element of $\{\bu_\nu\}_{\nu=\kappa}^{N_\kappa}$ such that $\mathcal{F}(\widetilde{\bu}_\kappa)=\max_{\kappa\le\nu \le N_\kappa}\mathcal{F}(\bu_\nu)$. By Lemma \ref{conlem} and the convexity assumption for $f$,
\begin{equation}\label{Eq:FukappaBnd}
    \mathcal{F}(\overline{\bu}_\kappa)
    =
    \mathcal{F}\left(\sum_{\nu=\kappa}^{N_\kappa}
        \lambda_\nu^{(\kappa)}\bu_\nu\right)
    \le
    \sum_{\nu=\kappa}^{N_\kappa}\lambda_\nu^{(\kappa)}\mathcal{F}(\bu_\nu)
    \le \left(\sum_{\nu=\kappa}^{N_\kappa}\lambda_\nu^{(\kappa)}\right)
        \mathcal{F}(\widetilde{\bu}_\kappa)=\mathcal{F}(\widetilde{\bu}_\kappa).
\end{equation}
Moreover, since $\{\overline{\bu}_\kappa\}$ was identified so that $\overline{\bu}_\kappa\to\overline{\bu}$ a.e. in $\Omega\cup\Gamma$, the continuity and measurability assumptions on $f$, imply
\[
    f(\bx,\by-\bx,\overline{\bu}_\kappa(\bx),\widehat{\overline{\bu}}_\kappa(\bx,\by))
    \to
    f(\bx,\by-\bx,\overline{\bu}(\bx),\widehat{\overline{\bu}}(\bx,\by))
    \quad\text{ a.e. }\bx,\by\in\Omega\cup\Gamma.
\]
By Fatou's Lemma
\[
    \mathcal{F}(\overline{\bu})
    \le
    \liminf_{\kappa\to\infty}\mathcal{F}(\overline{\bu}_\kappa)
    \le
    \liminf_{\kappa\to\infty}\mathcal{F}(\widetilde{\bu}_\kappa).
\]
Since $\widetilde{\bu}_\kappa$ is a subsequence of our original minimizing sequence, we deduce that
\[
    \mathcal{F}(\overline{\bu})
    =\inf\left\{\mathcal{F}(\bv):\bv\in\mathcal{A}\right\}.
\]
Thus, $\overline{\bu}$ is a minimizer of \eqref{Fu}.
\end{proof}
The next couple of remarks indicate some generalizations of Theorem~\ref{jc}. The key components of the argument are uniform bounds for the $L^p$ norm of the minimizing sequence, as in \eqref{Eq:Est43}, and the convexity of $\mathcal{A}\subseteq W^p_{\alpha_1}$, so that Mazur's lemma can be used.
\begin{rem}
One extension of Theorem~\ref{jc} is to allow admissible functions with only partially prescribed values on $\Gamma$. More precisely, there is a $\Gamma'\subseteq\Gamma$ such the admissible class is
\[
    \mathcal{A}:=\left\{\bv\in W^p_{\alpha_1}(\Omega\cup\Gamma;\re^N)
        :[\bv-\bu_0]\in W^p_{\alpha_1,\Gamma'}(\Omega\cup\Gamma;\re^N)\right\}.
\]
Thus $\bv\in\mathcal{A}$ have unspecified values on $\Gamma\setminus\Gamma'$. The argument for the bound in \eqref{Eq:Est43} is the same, but an additional coercivity term is required to compensate for the uncontrolled boundary component. For example, we could modify \eqref{coereq} to
\[
    f(\bx,\by-\bx,\bu,\bm{\xi})\ge \alpha_1(\bx,\by-\bx)|\bm\xi|^p
    +\alpha_2'(\bx,\by-\bx)|\bu|^p,
    +\alpha_3(\bx,\by-\bx),
\]
with $\alpha_2'\in L^\infty[(\Omega\cup\Gamma)^2]$ satisfying $\alpha_2'\ge M$ uniformly on $(\Omega\cup\Gamma)^2$, for some sufficiently large $M<\infty$. The lower bound on $M$ depends on the other structural assumptions for $f$ and, in particular, the constant that appears in the nonlocal Poincar\'{e} inequality.
\end{rem}
\begin{rem}
Another extension is to impose constraints on the admissible function that preserve the convexity of the admissible class. For example, we could seek a minimizer for $\mathcal{F}$ over
\[
    \mathcal{A}:=\left\{\bv\in W^p_{\alpha_1}(\Omega\cup\Gamma;\re^N)
        :   \|\bv\|_{L^p(\Gamma;\re^N)}\le M\right\}.
\]
Thus the admissible functions have unprescribed boundary values but there is a control on the $L^p$ size of the boundary portion of the function. For this type of problem, no modification of the coercivity condition on $f$ is needed. The boundary terms in \eqref{Eq:Est43} are bounded by assumption.
\end{rem}

\section{Regularity of minimizers}\label{RoM}

We have existence and uniqueness of minimizers for the energy functional \eqref{NLF} as it follows from Theorem \ref{jc}, hence, we have a solution for the nonlocal boundary value problem. 

In this section we show that under mild assumptions for the kernel the solutions to semilinear problems have $W^{1,1}$ regularity, then $C^{\infty}$.

\begin{thm}[$W^{1,1}$ regularity of solutions]\label{RoS}
Let $\mu\in W^{1,1}(\widehat{\Omega \cup \Gamma};\re)$ (with the notation of \eqref{hatset})  
%with supp$(\mu (\cdot - x))\subset \Omega\cup \Gamma$, for every $\bx\in \Omega$ 
and $\|\mu\|_1=1$. Let $\bm f_0: \re^n \times \re^N \to \re^N$ 
%with $\bm f \in C^1(\re^n \times \re^N;\re^N)$.  Furthermore, 
for which we assume that there exists a $\bm g \in W^{1,\infty}(\re^n \times \re^N ;\re^N)$ such that for each $\bx \in \re^n$ and $\bu \in \re^N$ we have 
\begin{equation}\label{inv}
\bm g(\bx, \bu+\ds\frac{1}{2}\bm f_0(\bx,\bu))=\bu.
\end{equation}
%Moreover, assume that $\bm g(\bx, \cdot) \in W^{1,\infty}(\re^N;\re^N)$. 
Let $\bu \in L^1(\Omega\cup \Gamma;\re^N)$ be a solution of
\[
\mathcal{L}_{\mu}[\bu](\bx)=\bm f_0(\bx,\bu),
\]
for every $\bx \in \Omega$, with
\[
\mathcal{L}_{\mu}[\bu](\bx)=2\int_{\Omega \cup \Gamma} \left(\bu(\by)-\bu(\bx)\right)\mu(\by-\bx) \dd \by.
\]
 Then under the above assumptions
\[
\bu \in W^{1,1}(\Omega;\re^N).
\]
\end{thm}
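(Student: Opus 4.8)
The plan is to use the inversion hypothesis \eqref{inv} to convert the equation into a closed formula for $\bu$ in which the non‑smoothing operator $\mathcal L_{\mu}$ is replaced by an honest convolution. For $\bx\in\Omega$, first expand
\[
    \mathcal L_{\mu}[\bu](\bx)=2\int_{\Omega\cup\Gamma}\bu(\by)\,\mu(\by-\bx)\dd\by
    -2\,\bu(\bx)\int_{\Omega\cup\Gamma}\mu(\by-\bx)\dd\by .
\]
Since $\|\mu\|_1=1$ and the collar $\Gamma$ is chosen wide enough that $\bx+\operatorname{supp}\mu\subseteq\Omega\cup\Gamma$ for every $\bx\in\Omega$, the last integral equals $1$; hence $\mathcal L_{\mu}[\bu](\bx)=\bm f_0(\bx,\bu(\bx))$ is equivalent to
\[
    w(\bx):=\int_{\Omega\cup\Gamma}\bu(\by)\,\mu(\by-\bx)\dd\by
    =\bu(\bx)+\tfrac{1}{2}\,\bm f_0\bigl(\bx,\bu(\bx)\bigr),\qquad \bx\in\Omega .
\]
Applying $\bm g(\bx,\cdot)$ to both sides and invoking \eqref{inv} would then give the pointwise representation $\bu(\bx)=\bm g\bigl(\bx,w(\bx)\bigr)$ for $\bx\in\Omega$.

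Next I would show $w\in W^{1,1}(\Omega;\re^N)$ by exploiting the convolution structure. Extending $\bu$ by zero and $\mu$ by zero to $\re^n$, for $\bx\in\Omega$ the expression above coincides with $w(\bx)=(\bu\ast\widetilde\mu)(\bx)$, where $\widetilde\mu(\bz):=\mu(-\bz)\in W^{1,1}(\re^n)$ (note $\widehat{\Omega\cup\Gamma}=-\widehat{\Omega\cup\Gamma}$, and that on $\Omega$ this agrees with the original integral since $\bu$ vanishes off $\Omega\cup\Gamma$). By Young's inequality for convolutions, $w\in L^1(\re^n;\re^N)$ and $\nabla w=\bu\ast\nabla\widetilde\mu\in L^1(\re^n;\re^{N\times n})$ with $\|\nabla w\|_{1}\le\|\bu\|_{1}\|\nabla\mu\|_{1}$; hence $w\in W^{1,1}(\re^n;\re^N)$, and in particular $w\in W^{1,1}(\Omega;\re^N)$. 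This is exactly the point at which the convolution supplies the derivative that $\mathcal L_{\mu}$ by itself does not.

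The last step would be the composition with the Lipschitz map $\bm g$. Since $\bm g\in W^{1,\infty}(\re^n\times\re^N;\re^N)$ is globally Lipschitz and bounded, and $\bx\mapsto(\bx,w(\bx))$ belongs to $W^{1,1}(\Omega;\re^n\times\re^N)$, the composition $\bu=\bm g(\cdot,w)$ belongs to $W^{1,1}(\Omega;\re^N)$: boundedness of $\bm g$ gives $\bu\in L^\infty(\Omega)\subseteq L^1(\Omega)$, while, using the absolute‑continuity‑on‑lines characterization of $W^{1,1}$, on almost every line parallel to a coordinate axis $\bx\mapsto(\bx,w(\bx))$ is absolutely continuous, hence so is $\bx\mapsto\bm g(\bx,w(\bx))$, with classical partial derivatives dominated a.e. by $\operatorname{Lip}(\bm g)\,(1+|\nabla w|)\in L^1(\Omega)$. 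This yields $\bu\in W^{1,1}(\Omega;\re^N)$.

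The conceptual heart is the reformulation in the first step: inverting the pointwise nonlinearity through \eqref{inv} turns $\mathcal L_{\mu}[\bu]=\bm f_0(\cdot,\bu)$ into the fixed‑point identity $\bu=\bm g(\cdot,\mu\ast\bu)$, after which convolution with the $W^{1,1}$ kernel gains one derivative and the Lipschitz change of variables $\bm g$ transfers it to $\bu$. The one technical subtlety I expect to require care is the last step — a Lipschitz function composed with a merely $W^{1,1}$ map could a priori land only in $BV$ — but the ACL description of $W^{1,1}$ settles it directly; matching the integral over $\Omega\cup\Gamma$ with the $\re^n$‑convolution and justifying $\int_{\Omega\cup\Gamma}\mu(\by-\bx)\dd\by=1$ on $\Omega$ is then routine bookkeeping.
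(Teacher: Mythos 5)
Your proposal is correct and follows essentially the same path as the paper: rewrite $\mathcal{L}_\mu[\bu]=\bm f_0(\cdot,\bu)$ as $(\bu\ast\mu)(\bx)=\bu(\bx)+\tfrac12\bm f_0(\bx,\bu(\bx))$, apply \eqref{inv} to obtain $\bu=\bm g(\cdot,\bu\ast\mu)$, use Young's inequality to get $\bu\ast\mu\in W^{1,1}$, and then invoke the Sobolev chain rule to conclude $\bu\in W^{1,1}(\Omega)$. Where the paper simply cites the Marcus--Mizel chain rule, you argue it directly via the ACL characterization and also make explicit the normalization $\int_{\Omega\cup\Gamma}\mu(\by-\bx)\dd\by=1$ needed for the reformulation, which the paper handles tacitly by extending $\mu$ and $\bu$ by zero.
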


\begin{proof}

Since $\bu$ is a solution to the given integral equation, the definition of the nonlocal Laplacian (upon trivially extending $u$ and $\mu$ by zero outside $\Omega\cup\Gamma$) gives
\begin{equation}\label{nonlf}
(\bu*\mu)(\bx)-\bu(\bx)=\frac{1}{2}\bm f_0(\bx,\bu(\bx)), \ \bx \in \Omega,
\end{equation}
where $\bu *\mu =(u_1*\mu, u_2*\mu, ..., u_N*\mu).$ Then, by assumption \eqref{inv} there is a $\bm g\in W^{1,\infty}(\re^n \times \re^N; \re^N)$ such that for each $\bx \in \Omega$ 
\begin{equation}\label{bseq}
\bm g(\bx, (\bu*\bm{\mu})(\bx))=\bu(\bx).
\end{equation}
Since $\mu\in W^{1,1}(\widehat{\Omega \cup \Gamma};\re)$ and $\bu \in L^1(\Omega\cup \Gamma;\re^N)$ we have that $\bu *\mu\in W^{1,1}(\Omega;\re)$. By the regularity of $\bm g$, we have $\bx \mapsto \bm g(\bx,(\bu*\bm{\mu})(\bx)) \in W^{1,1}(\Omega; \re^N)$ by the chain rule for Sobolev functions (see \cite{Mizel}, and \cite{leoni} for a more general result). Thus
\[
\bu \in W^{1,1}(\Omega; \re^N).
\] 
\end{proof}

Assuming additional regularity for the function $\bm g$ (which is basically the inverse of $\bm f$ with respect to the $\bu$ variable) and for the kernel $\mu$ yields additional regularity for the solution $\bu$. The result below shows that one could eventually obtain $C^{\infty}$ regularity; it is easily seen that in fact $W^{k,1}$ regularity for $\bm g$ and $\mu$ are matched by $W^{k,1}$ regularity for $\bu$. 
\begin{thm}[$C^{\infty}$ regularity of solutions]\label{RoS1}
Under the assumptions of Theorem \ref{RoS}, additionally assume that $\bm g \in C^{\infty}(\re^n \times \re^N ;\re^N)$ and \\
$\mu\in C^{\infty}(\widehat{\Omega\cup\Gamma};\re)$. Then
\[
\bu \in C^{\infty}(\Omega; \re^N).
\] 
\end{thm}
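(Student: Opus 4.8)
The plan is to bootstrap the regularity argument from Theorem~\ref{RoS} by iterating the chain rule for Sobolev functions. Recall that Theorem~\ref{RoS} already establishes $\bu\in W^{1,1}(\Omega;\re^N)$ via the identity \eqref{bseq}, namely $\bu(\bx)=\bm g(\bx,(\bu*\bm\mu)(\bx))$. The key structural observation is that this identity expresses $\bu$ as a composition of a smooth map $\bm g$ with $\bx\mapsto(\bx,(\bu*\bm\mu)(\bx))$, and that convolution with $\mu$ gains regularity in a controlled way: if $\bu$ has some regularity on $\Omega\cup\Gamma$ and $\mu\in C^\infty(\widehat{\Omega\cup\Gamma};\re)$, then $\bu*\mu$ has one more derivative on $\Omega$ (indeed $\partial^\alpha(\bu*\mu)=\bu*\partial^\alpha\mu$ for any multi-index $\alpha$, provided the convolution is interpreted on the enlarged domain as in \eqref{nonlf}). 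The main work is to set up a clean induction showing that $\bu\in W^{k,1}(\Omega;\re^N)$ for every $k$, whence $\bu\in C^\infty$ by Sobolev embedding (for $\Omega$ with, say, Lipschitz boundary, or else working on compactly contained subdomains).

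First I would establish the base case: by Theorem~\ref{RoS}, $\bu\in W^{1,1}(\Omega;\re^N)$. For the inductive step, assume $\bu\in W^{k,1}(\Omega;\re^N)$; I actually need to carry along the slightly stronger hypothesis that $\bu\in W^{k,1}$ on an open neighborhood of $\overline\Omega$ inside $\Omega\cup\Gamma$, or simply localize, since the convolution $\bu*\mu$ on $\Omega$ sees values of $\bu$ on $\Omega\cup\Gamma$. The cleanest route is: since $\mu\in C^\infty(\widehat{\Omega\cup\Gamma};\re)$ and $\bu\in L^1(\Omega\cup\Gamma;\re^N)$, differentiating under the integral sign gives $\bu*\mu\in C^\infty(\Omega;\re^N)$ directly — one does not even need the inductive hypothesis on $\bu$ for this, because all derivatives fall on the smooth kernel $\mu$. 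Concretely, for any multi-index $\alpha$, $\partial_{\bx}^\alpha(\bu*\mu)(\bx)=\int(\bu(\by))\,\partial_{\bx}^\alpha[\mu(\bx-\by)]\dd\by$, and since $\partial^\alpha\mu$ is bounded (being continuous on the compact-closure set $\widehat{\Omega\cup\Gamma}$, after the zero extension is handled carefully) and $\bu\in L^1$, this integral converges and depends continuously on $\bx$; a standard dominated-convergence argument shows it is indeed the classical derivative. Hence $\bx\mapsto(\bx,(\bu*\bm\mu)(\bx))$ is a $C^\infty$ map from $\Omega$ into $\re^n\times\re^N$.

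Then I would compose: $\bm g\in C^\infty(\re^n\times\re^N;\re^N)$ by the additional hypothesis, so $\bu(\bx)=\bm g(\bx,(\bu*\bm\mu)(\bx))$ is a composition of $C^\infty$ maps and therefore $\bu\in C^\infty(\Omega;\re^N)$. This actually sidesteps the induction entirely once one observes that the smoothness of $\mu$ alone already forces $\bu*\bm\mu\in C^\infty(\Omega)$; the role of Theorem~\ref{RoS} was only to guarantee that \eqref{bseq} holds as a genuine identity of functions. If one instead only assumes $\mu\in W^{k,1}$ and $\bm g\in W^{k,\infty}$ (the "matched regularity" remark preceding the theorem), then the induction is genuinely needed: $\bu*\mu$ picks up one derivative per step via $\partial^\alpha(\bu*\mu)=(\partial^\alpha\bu)*\mu$ or $=\bu*(\partial^\alpha\mu)$ distributing the $k$ derivatives, then the $W^{k,\infty}$ chain rule for Sobolev compositions (again citing \cite{Mizel,leoni}) upgrades $\bu=\bm g(\cdot,\bu*\bm\mu)$ from $W^{k-1,1}$ input to $W^{k,1}$ output, closing the loop.

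The main obstacle, and the only point requiring genuine care, is the domain bookkeeping around the convolution: $\mathcal L_\mu[\bu]$ and hence \eqref{nonlf} involve $\bu$ and $\mu$ extended by zero outside $\Omega\cup\Gamma$ and $\widehat{\Omega\cup\Gamma}$ respectively, and this zero extension is generally not smooth across the boundary. One must therefore check that for $\bx$ in the \emph{interior} domain $\Omega$ — which by the standing assumption \eqref{Eq:posDelta} (from Theorem~\ref{jc}) or more simply by the collar structure sits at positive distance from $\partial(\Omega\cup\Gamma)$ — the integration in $\bu*\mu$ only samples $\mu$ on a compact subset of the open set $\widehat{\Omega\cup\Gamma}$ where $\mu$ is genuinely $C^\infty$, so the differentiation-under-the-integral argument is legitimate and no boundary artifacts of the zero extension intrude. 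Granting that, the composition argument is immediate and $\bu\in C^\infty(\Omega;\re^N)$ follows.
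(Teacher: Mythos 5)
Your proof is correct, and it takes a genuinely different (and cleaner) route than the paper's. The paper proceeds by bootstrapping: from Theorem~\ref{RoS} it has $\bu\in W^{1,1}(\Omega)\cap L^1(\Omega\cup\Gamma)$, deduces $\bu*\mu\in W^{2,1}(\Omega)$, applies the chain rule to \eqref{bseq} to get $\bu\in W^{2,1}(\Omega)$, iterates this to obtain $\bu\in W^{k,1}(\Omega)$ for all $k$, and finally invokes the Sobolev embedding theorem to conclude $C^\infty$. You instead observe that the $C^\infty$ assumption on $\mu$ already forces $\bu*\mu\in C^\infty(\Omega)$ directly from $\bu\in L^1(\Omega\cup\Gamma)$, because in the convolution every derivative can be placed on the kernel, and the collar condition (e.g.\ \eqref{Eq:posDelta}) guarantees that the arguments $\bx-\by$ with $\bx\in\Omega$, $\by\in\Omega\cup\Gamma$ remain in a compact subset of $\widehat{\Omega\cup\Gamma}$ where $\mu$ and all its derivatives are bounded, so differentiation under the integral is legitimate and the zero extension of $\mu$ never intrudes. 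Composing with $\bm g\in C^\infty(\re^n\times\re^N;\re^N)$ via \eqref{bseq} then gives $\bu\in C^\infty(\Omega;\re^N)$ as a classical composition of smooth maps. Your approach buys two things: it eliminates both the induction and the appeal to Sobolev embedding, and, more importantly, it sidesteps any delicate use of the Sobolev chain rule for $k\ge2$ (the step $\bu*\mu\in W^{2,1}\Rightarrow\bm g(\cdot,\bu*\mu)\in W^{2,1}$ for a merely $W^{2,1}$ inner function is not automatic in dimensions $n\ge3$; the paper's argument is really saved by the fact that $\bu*\mu$ is actually $C^\infty_b$, which is precisely your observation). The paper's bootstrap formulation is better suited to the finite-regularity case ($\mu\in W^{k,1}$, $\bm g\in W^{k,\infty}$) mentioned in the remark preceding the theorem, which you also correctly identify as the setting where a genuine induction is needed.
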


\begin{proof}
By Theorem \ref{RoS} we have $\bu \in W^{1,1}(\Omega; \re^N)\cap L^1(\Omega\cup \Gamma)$. Using again the regularity of $\mu$ it follows that $\bu*\bm{\mu} \in W^{2,1}(\Omega;\re^N)$ and so by \eqref{bseq} and by using the chain rule again, we have that $\bu \in W^{2,1}(\Omega;\re^N)$. Iterating this argument $k$ times for $k\in \na$, we obtain $\bm u \in W^{k,1}(\Omega; \re^N)$.  %Finally, by interpolation we obtain $\bu \in W^{s,1}(\Omega;\re^N)$ for any $s>0$. 
The Sobolev embedding theorem will then yield that $\bu \in C^\infty(\Omega;\re^N)$.

\end{proof}

\subsection{Example on existence, uniqueness, and regularity of solutions.} For simplicity consider a scalar case with $n=N=1$; the arguments can be easily extended to the vectorial setting. Set the domain $\Omega=(-1,1)$ with $\Gamma=\re \setminus (-1,1)$. Let 
\begin{equation}\label{gauss}
\mu(x)=\ds\frac{1}{\sqrt{\pi}}e^{-x^2}
\end{equation} for which $\|\mu\|_{L^1(\re)}=1$.  Let $f_0:(x,u)\in\re\times \re \to \re$
\[
f_0(x,u)=2\frac{\arctan{u}+1}{x^2+1}.
\] 
The equation
\begin{equation}\label{eq4}
\mc{L}_{\mu}[u](x)=2\frac{\arctan{u}(x)+1}{x^2+1}, \quad x\in (-1,1)
\end{equation}
for functions $u$ with $u=0$ on $\Gamma=\re \setminus (-1,1)$ is the Euler-Lagrange equation associated with the energy functional given by 
\begin{align*}
\mathcal{F}[u]&=\Int_{\re}\Int_{\re} [u(y)-u(x)]^2 \mu(y-x) \\
&\hspace*{1.5in}+\frac{2u(x)\arctan u(x)-\ln[1+u^2(x)]+u(x)}{x^2+1} \, dy dx.
\end{align*}
Indeed, since the regularity assumptions and the growth assumption (GI) of Theorem \ref{NELE} are satisfied by the integrand of the energy functional, we have that the minimizers satisfy the Euler-Lagrange equation \eqref{eq4}. To show existence and uniqueness of minimizers observe that the functional form of the integrand given by
\[
 f(x, z, u,\xi)= \xi^2 \mu(z) +\frac{2u \arctan u-\ln(1+u^2)+u}{x^2+1}
\]
clearly satisfies the continuity and measurability assumptions of Theorem \ref{EoM}. Moreover, it is jointly convex in $u$ and $\xi$, as it is the sum of a convex function in $u$ and a convex function in $\xi$ (the convexity in $u$ can be easily verified through the positivity of the second order derivative; indeed, $\ds\frac{1}{(1+u^2)(x^2+1)} \geq 0$ for all $x$ and $u$). For coercivity (as required by Theorem \ref{EoM}) note first that
\[
\frac{2u \arctan u-\ln(1+u^2)+u}{x^2+1} \geq 0
\]
for all $x,u \in \re$. Thus we have
\[
|f(x, z, u,\xi)| \geq c_1 |\xi|^2
\]  
for all $|z|\leq 1/2$ (take $\delta=1/2$ in assumption \eqref{Eq:posDelta}). To study the regularity of solutions we observe that $\mu\in C^{\infty}(\re)$. By \eqref{nonlf} we obtain
\begin{equation}\label{eq5}
(u *\mu)(x) =\frac{\arctan{u}(x)+1}{x^2+1} +u(x).
\end{equation}
Since the function 
\[
(x,u)\mapsto \ds\frac{\arctan{u}+1}{x^2+1} +u
\]
is $C^{\infty}$ and its derivative with respect to $u$ is uniformly positive, by the implicit function theorem, there is a $C^\infty$ function $g:\re\times\re\to\re$ such that
\[
	\frac{\arctan{g(x,u)}+1}{x^2+1} +g(x,u)=u
\] 
for all $x,u\in \re$. Thus, from \eqref{eq5} we have
\[
u(x)=g(x,(u*\mu)(x)),
\]
so Theorem \ref{RoS1} implies $u\in C^{\infty}(\re)$.

The regularity theorem \ref{RoS} has an interesting consequence that regards {\it ill-posedness} of nonlocal problems. We present this aspect through an example below.

\subsection{Example on ill-posedness.}
 Let $\mu \in L^{\infty}(\Omega; \re)$ (for example, the Gaussian \eqref{gauss}). Assume that
\[
\bm f_0(\bx,\bu)=\bm h(\bx)-\bu,
\]
for some $\bm h \in L^1(\Omega; \re^N)\setminus  L^{\infty}(\Omega; \re^N).$ 
Then the Euler-Lagrange equation
\begin{equation}\label{illposed}
\mc{L}[\bu](\bx)+\bm f_0(\bx,\bu)=0
\end{equation}
becomes
\[
(\bu\ast \bm{\mu})(\bx) -\bu(x)=\bm h(\bx)-\bu(\bx),
\]
hence $(\bu\ast \bm{\mu})(\bx) =\bm h(\bx)$. As a consequence of Young's inequality for convolutions (which is the idea behind the higher integrability result of \cite[Theorem 3.1]{FR}) we have that
\[
\bu \ast \bm{\mu} \in L^{\infty}(\Omega; \re^N).
\]
 Therefore, we must have that $\bm h\in L^{\infty}(\Omega; \re^N)$, which is precluded by the choice of $\bm h$. Thus, the problem \eqref{illposed} does not admit a solution, which shows that even {\it linear} problems may be ill-posed in the nonlocal setting, when the source term $\bm f_0$ is not sufficiently regular to accommodate the gain in regularity given by $\bm\mu$.

\section{Examples}\label{Ex}

In the proceeding examples, assume $\bm\mu: \widehat{\Omega \cup \Gamma} \to \re^N$ is positive and symmetric. Also in each example $f:\Omega \cup \Gamma \times \widehat{\Omega \cup \Gamma} \times \re^N \times \re^N \to \re$ will be the integrand of the nonlocal functional \eqref{NLF}.  For each functional, we will require the boundary condition $\bu_0$ on $\Gamma \setminus \Gamma'$ where $\Gamma' \subseteq \Gamma$ for some $\bu_0 \in L^p(\Gamma \setminus \Gamma';\re^N)$.

\subsection{Quasilinear nonlocal example.}

Let $f(x,z,u,\xi)=\dfrac{|\bm\xi\otimes \bm\mu(\bz)|^p}{p}+G(\bx,\bu)$ where $\bm\mu \in L^p(\re^N)$ and $G: \Omega \cup \Gamma \times \re^N \to \re$ is convex for each $\bx$ with
\[
G(\bx,\bu) \ge a_1(\bx)|\bu|^q+a_2(\bx), \quad  \partial_u G(\bx,\bu) \le b_1(\bx)+b_2(\bx)|\bu|^{q}
\]
where $a_1\in L^{p/(p-q)}(\Omega \cup\Gamma), \ a_2 \in L^{1}(\Omega \cup\Gamma), b_1 \in L^{p/(p-1)}(\Omega \cup \Gamma), b_2 \in L^\infty(\Omega \cup\Gamma)$ for $1\le q < p$.  With these assumptions $f$ then satisfies the growth condition (GII) and Theorem \ref{jc}.  Thus, there is a minimizer $\bu \in \mc{W}^p_{\bm\mu}(\Omega \cup \Gamma)$ with $\bu=\bu_0$ on $\Gamma$ for the corresponding nonlocal functional, and it satisfies the nonlocal Euler-Lagrange equations. As in \eqref{CEL}
\[
 g(\bx,\bu,\bm\eta)=\dfrac{|\bm\eta|^p}{p}+G(\bx,\bu)
 \]
  and note $\partial_{\bm\eta}g(\bx,\bu,\bm\eta)=\bm\eta|\bm\eta|^{p-2}$. Then, using the fact $|\bm\xi \otimes \bm\mu|=|\bm\xi||\bm\mu|$ and \eqref{CEQeq}, we have
\begin{align*}
\int_{\Omega \cup \Gamma} g(\bx,\bu)\dd \by-\mc{D}\left[\widehat{\bu}(\bx,\by) \otimes \bm\mu(\by-\bx)|\widehat{\bu}(\bx,\by) \otimes \bm\mu(\by-\bx)|^{p-2}\right](\bx)=0
\end{align*}
for $\bx \in \Omega$.
Then, letting $g(\bx,\bu)=|\Omega \cup \Gamma| \partial_{\bu} G(\bx,\bu)$ and using $\eqref{G}$ and $\eqref{pL}$ we can rewrite the above as
\[
\mathcal{L}_{\bm\mu}^p[\bu](\bx)=g(\bx,\bu), \quad  a.e. \ \bx \in \Omega \cup \Gamma'.
\]
Thus, the minimizer of the functional is a solution to the $p$-Laplacian problem.

If we take $p=1$ in the above formulation and assume some more conditions on $G$ and $\bm\mu$ (so that $g$ and $\bm\mu$ satisfy the conditions of theorem \ref{RoS}) then we can achieve additional regularity on the minimizer $\bu$.

%{\bf Existence of solutions.} follows from ...

%{\bf regularity} - note that Theorem.... is satisfied.

%{\bf convergence of solutions for the linear problem}

 %In particular, let $C=m(\Omega \cup \Gamma)^{-1}$ and $\alpha(z)=\frac{\sigma(\delta)}{2}\mu_\delta(z)$ as given in \cite{fos-rad} (for p=1).   Then letting the horizon go to zero when $p=2$ and  we obtain that solutions to the nonlocal problem converge to solutions of the local couterpart
%\[
%\mc{L}_p u + u|u|^{q-1}=0.
%\]

\subsection{Another quasilinear example with nonlocal $p$-Laplacian.}

Consider the functional
\[
f(\bx,\bz,\bu,\bm\xi)=|(\bu+\bm\xi)\otimes \bm\mu|^q+|\bm\xi\otimes \bm\mu|^p
\]
with $1\le q<p$ and $|\bm\mu|^q$ satisfying the same conditions as $\alpha_1$ and $\alpha_2$ in Theorem \ref{jc}.
Then note that $f$ is convex with respect to $(\bu,\bm\xi)$ for all $\bx$ and $\bz$, with
\begin{align*}
f(\bx,\bz,\bu,\bm\xi) &\ge |\bm\mu(\bz)|^q(2^{q-1}|\bm\xi|^q+|\bm\xi|^p|\bm\mu(\bz)|^p)-|\bm\mu(\bz)|^q2^{q-1}|\bu|^q
\\
&\ge |\bm\xi|^p|\bm\mu(\bz)|^p-|\bm\mu(\bz)|^q|\bu|^q.
\end{align*}
Also the corresponding nonlocal functional is finite for $\bu \in \mc{W}^{p}_{\bm\mu}(\Omega \cup \Gamma)$, so by Theorem \ref{jc}, there is a minimizer $\bu \in \mathcal{W}^p_{\bm\mu}(\Omega \cup \Gamma)$ with $\bu=\bu_0$ on $\Gamma$. Now,
\begin{align*}
\partial_{\bu} f(\bx,\by-\bx,\bu(\bx),\bu(\by)-\bu(\bx))&=q\bu(\by)\bm|\bu(\by)|^{q-2}|\bm\mu(\by-\bx)|^q;\\
\partial_{\bm\xi} f(\bx,\by-\bx,\bu(\bx),\bu(\by)-\bu(\bx))&=q\bu(\by)\bm|\bu(\by)|^{q-2}|\bm\mu(\by-\bx)|^q\\
&+p\widehat{\bu}(\by)\bm|\widehat{\bu}(\by)|^{p-2}|\bm\mu(\by-\bx)|^p.
\end{align*}

Also the corresponding nonlocal functional is finite for $\bu \in \mc{W}^{p}_{\bm\mu}(\Omega \cup \Gamma;\re^N)$, so by Theorem \ref{jc}, there is a minimizer $\bu \in \mathcal{W}^p_{\bm\mu}(\Omega \cup \Gamma)$ with $\bu=\bu_0$ on $\Gamma$. Now, for $\bu,\bm\xi\in \re^N$,
\begin{align}\label{parf}
\begin{aligned}
\partial_{\bu} f(\bx,\by-\bx,\bu,\bm\xi)&=q\bu\bm|\bu|^{q-2}|\bm\mu(\by-\bx)|^q;\\
\partial_{\bm\xi} f(\bx,\by-\bx,\bu,\bm\xi)&=q\bu\bm|\bu|^{q-2}|\bm\mu(\by-\bx)|^q+p\bm\xi\bm|\bm\xi|^{p-2}|\bm\mu(\by-\bx)|^p.
\end{aligned}
\end{align}
Note that for $q<p$, $|\bm\gamma|^{q-1} \le 1+|\bm\gamma|^{p-1}$ for $\bm\gamma \in \re^N$. Hence by \eqref{parf}
\begin{align*}
|\partial_{\bu} f(\bx,\by-\bx,\bu,\bm\xi)|&\le q\bm|\bu|^{q-1}|\bm\mu(\by-\bx)|^q \\
&\le q(1+|\bu|^{p-1})|\bm\mu(\by-\bx)|^q\\
&\le q|\bm\mu(\by-\bx)|^q+q|\bm\mu(\by-\bx)|^q(|\bu|^{p-1}+|\bm\xi|^{p-1});\\
|\partial_{\bm\xi} f(\bx,\by-\bx,\bu,\bm\xi)|&\le q\bm|\bu|^{q-1}|\bm\mu(\by-\bx)|^q+p\bm|\bm\xi|^{p-1}|\bm\mu(\by-\bx)|^p\\
&\le p\bm|\bu|^{q-1}(1+|\bm\mu(\by-\bx)|^p)+p\bm|\bm\xi|^{p-1}(1+|\bm\mu(\by-\bx)|^p)\\
&\le p(1+|\bm\mu(\by-\bx)|^p)\\
&\quad +p(1+|\bm\mu(\by-\bx)|^p)(|\bu|^{p-1}+|\bm\xi|^{p-1})
\end{align*}

Hence $f$ clearly satisfies the growth condition (GII) and so \eqref{ELe2} holds for $\bm{\varphi} \in \mathcal{W}^{p}_{\bm\mu,0}(\Omega \cup \Gamma)$.  Then, using the strong form of the Euler-Lagrange equations we have for a.e. $\bx \in \Omega \cup \Gamma$,
\begin{align*}
q\bu(\bx)\bm|\bu(\bx)|^{q-2}\|\bm\mu^q\|_{L^1}-2\int_{\Omega \cup \Gamma}\widehat{\bu}(\by)\bm|\widehat{\bu}(\by)|^{q-2}|\bm\mu(\by-\bx)|^q\dd y=0.
\end{align*}
Using \eqref{pL} and letting $M_{p,q}=\ds\frac{p}{q}\|\bm\mu^q\|_{L^1}$, we can rewrite this as
\begin{equation}\label{qLe}
\mathcal{L}_{\bm\mu}^q[\bu](\bx)=M_{p,q}\bu(\bx)|\bu(\bx)|^{p-2} \ a.e. \ \bx \in \Omega \cup \Gamma .
\end{equation}

\subsection{Semilinear case.}

For this case we will consider $N=1$ and the integrand of \eqref{NLF} be given by
\begin{equation}\label{QI}
f(\bx,\bz,u,\xi)=G(\bx,u)+2u\xi\mu(\bz)+[\xi\mu(\bz)]^2,
\end{equation}
where $\mu$ satisfies the same conditions as $\alpha_1$ in Theorem \ref{jc}, and $\mu^2(\bx)>\mu(\bx)$, for every $\bx\in \re^n$. Denote $\partial_u G(\bx,u)=g(\bx,u)$, and let $G$ be such that $f$ given by \eqref{QI} is convex with respect to $(u,\xi)$. Also, suppose that $|g(\bx,u)| \le C|u|^p$ and $G(\bx,u) \ge C|u|^p$ for some $p >1$.  Then the corresponding nonlocal functional satisfies Theorem \ref{jc}, and hence there is a minimizer $u \in \mathcal{W}^p_{\mu}(\Omega \cup\Gamma)$ with $u=u_0$ on $\Gamma$.  Also, $f$ satisfies growth condition (GII), so we can write down the Euler-Lagrange equation for this functional.

\begin{prop}\label{ELQ}  The strong form of the Euler-Lagrange equations associated with the functional \eqref{QI} can be written as
\begin{equation}\label{elq1}
 \quad (u*\gamma)(\bx)-u(\bx)=Mg(\bx,u(\bx)) \quad a.e. \ \bx \in \Omega \cup \Gamma',
\end{equation}
where
\[
M=\|\mu\|_{L^2}-\|\mu\|_{L^1} >0, \  C=\frac{|\Omega \cup \Gamma|}{2M}, \ \gamma(\bz):=\frac{\mu(\bz)^2-\mu(\bz)}{M}>0, \ \bz \in \widehat{\Omega \cup \Gamma}
\]
\end{prop}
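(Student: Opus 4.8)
The idea is to feed the integrand \eqref{QI} into the strong form of the nonlocal Euler--Lagrange equations, Corollary~\ref{sEL}. As noted just above the proposition, this $f$ has the regularity (R) and the growth (GII), and the associated functional admits a minimizer $u$ with $u=u_0$ on $\Gamma\setminus\Gamma'$ by Theorem~\ref{jc}; hence Corollary~\ref{sEL} gives, for a.e.\ $\bx\in\Omega\cup\Gamma'$,
\[
\int_{\Omega\cup\Gamma}\Bigl[\,f_u(\bx,\by-\bx,u,\widehat u)-\bigl(f_\xi(\bx,\by-\bx,u,\widehat u)-\widetilde{f_\xi}(\bx,\bx-\by,u,\widehat u)\bigr)\Bigr]\dd\by=0 ,
\]
so the entire proof is the evaluation of this integrand.

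First I would differentiate \eqref{QI}: $f_u(\bx,\bz,u,\xi)=g(\bx,u)+2\xi\mu(\bz)$ and $f_\xi(\bx,\bz,u,\xi)=2u\mu(\bz)+2\xi\mu(\bz)^2$. Substituting $\bz=\by-\bx$, $u=u(\bx)$, $\xi=\widehat u(\bx,\by)$ produces the first two terms directly. For the third, $\widetilde{f_\xi}(\bx,\bx-\by,u,\widehat u)$ is, by definition, $f_\xi$ evaluated at $(\by,\bx-\by,u(\by),\widehat u(\by,\bx))$; here I would use $\widehat u(\by,\bx)=-\widehat u(\bx,\by)$ together with the symmetry of $\mu$ assumed at the start of Section~\ref{Ex}, i.e.\ $\mu(\bx-\by)=\mu(\by-\bx)$. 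Forming $f_\xi-\widetilde{f_\xi}$, the two linear-in-$u$ pieces combine to $-2\widehat u(\bx,\by)\mu(\by-\bx)$ and the two quadratic pieces to $4\widehat u(\bx,\by)\mu(\by-\bx)^2$; subtracting this from $f_u$ keeps the $g$-term intact and leaves a contribution proportional to $\widehat u(\bx,\by)\bigl(\mu^2-\mu\bigr)(\by-\bx)$. The point is that the symmetry of $\mu$ forces all of the $\by$-dependence onto the single kernel $\mu^2-\mu$.

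Next I would integrate in $\by$ over $\Omega\cup\Gamma$. The $g$-term contributes $|\Omega\cup\Gamma|\,g(\bx,u(\bx))$; extending $u$ and $\mu$ by zero outside $\Omega\cup\Gamma$, exactly as in the proof of Theorem~\ref{RoS}, the remaining term becomes $(u*(\mu^2-\mu))(\bx)-u(\bx)\int(\mu^2-\mu)$, where one uses that $\int_{\Omega\cup\Gamma}(\mu^2-\mu)(\by-\bx)\dd\by=\int_{\re^n}(\mu^2-\mu)$ for the relevant $\bx$ (the horizon of $\mu$ fitting inside the collar, cf.\ \eqref{Eq:posDelta}). Setting $M:=\int_{\re^n}(\mu^2-\mu)$ and $\gamma:=(\mu^2-\mu)/M$ normalizes the kernel so that $\int\gamma=1$, and the pointwise hypothesis $\mu^2>\mu$ gives both $M>0$ and $\gamma>0$. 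Collecting the numerical constants into $C$, $M$, $\gamma$ as in the statement then rearranges the identity into the convolution equation \eqref{elq1}.

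The main obstacle is the bookkeeping for the $\widetilde{f_\xi}$ term: one must track the roles of $\bx$ and $\by$ in the second ($\bz$) and fourth ($\xi$) slots of $f_\xi$ and invoke the symmetry of $\mu$ at precisely the right step, since it is exactly this symmetry that makes the residual $\by$-dependence assemble into the single kernel $\mu^2-\mu$ rather than into two unrelated kernels. The remaining points are routine: that $u*\gamma$ is well defined ($\gamma\in L^1$ and $u\in L^1(\Omega\cup\Gamma)$ under the standing hypotheses on $\mu$), and that the multiplicative constants are carried through faithfully so that the final form matches the constants recorded in the statement.
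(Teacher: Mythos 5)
Your plan is correct and is essentially the paper's proof: invoke Corollary \ref{sEL}, differentiate \eqref{QI}, use the symmetry of $\mu$ and the identity $\widehat{u}(\by,\bx)=-\widehat{u}(\bx,\by)$ to reduce $f_u-\bigl(f_{\xi}-\widetilde{f_{\xi}}\bigr)$ to $g(\bx,u(\bx))-4\,\widehat{u}(\bx,\by)\,\bigl(\mu^2-\mu\bigr)(\by-\bx)$, then integrate in $\by$ and rewrite via convolutions. The paper passes through the operator notation $\mathcal{G}_{\mu}$, $\mathcal{G}_{\mu^{2}}$, $\mathcal{L}_{\mu^{2}}$ before converting to convolutions, while you go directly; that is cosmetic. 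You are in fact a touch more careful than the paper in flagging the zero-extension of $u,\mu$ and the horizon condition \eqref{Eq:posDelta} needed to make $\int_{\Omega\cup\Gamma}(\mu^2-\mu)(\by-\bx)\,\dd\by$ independent of $\bx\in\Omega$.

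One small caution about your closing remark that the constants match the statement. Carrying the constants faithfully gives
\[
(u*\gamma)(\bx)-u(\bx)=\frac{|\Omega\cup\Gamma|}{4M}\,g(\bx,u(\bx))=\frac{C}{2}\,g(\bx,u(\bx)),
\]
not $M\,g(\bx,u(\bx))$ as displayed in \eqref{elq1}; the paper's own proof likewise ends with $\mathcal{L}_{\gamma}[u]=C\,g$, which is the statement above, so the factor $M$ in \eqref{elq1} (and $\|\mu\|_{L^2}$ in place of $\|\mu\|_{L^2}^2=\|\mu^2\|_{L^1}$ in the definition of $M$) looks like a typographical slip in the paper rather than a gap in your derivation. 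Your computation is consistent with what the proposition should say.
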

\begin{proof} First, note
\begin{align*}
f_{u}(\bx,\by)&=g(\bx,u(\bx))+2 \mathcal{G}_{\mu}[u](\bx,\by);\\
f_{\xi}(\bx,\by)&=2u(\bx)\mu(\bz)+2 \mathcal{G}_{\mu^2}[u](\bx,\by).
\end{align*}
The strong form of the Euler-Lagrange equations \eqref{sEL} written for the functional \eqref{QI}  give that for a.e. $\bx \in \Omega \cup \Gamma$ we have
\begin{align*}
\int_{\Omega\cup\Gamma} &g(\bx,u(\bx))+2\mathcal{G}_{\mu}[u](\bx,\by)-\left[(2u(\bx)\mu(\by-\bx) \right.\\
&\left. +2\mathcal{G}_{\mu^2}[u](\bx,\by))-(2u(\by)\mu(\by-\bx)+2\mathcal{G}_{\mu^2}[u](\by,\bx))\right]\dd \by=0.
\end{align*}
Combining terms and using the definition of the nonlocal gradient this is
\begin{align*}
|\Omega \cup \Gamma|g(\bx,u(\bx))&+2\int_{\Omega \cup \Gamma}\left(\mathcal{G}_{\mu^2}[u](\by,\bx)-\mathcal{G}_{\mu^2}[u](\bx,\by)\right)\dd \by\\
&+4\int_{\Omega\cup\Gamma}\mathcal{G}_{\mu}[u](\bx,\by)\dd \by=0.
\end{align*}
Now, using the definition of the nonlocal Laplacian we obtain
\[
|\Omega \cup \Gamma|g(\bx,u(\bx))-2\mathcal{L}_{\mu^2}[u](\bx)+4\int_{\Omega\cup\Gamma}\mathcal{G}_{\mu}[u](\bx,\by)\dd \by=0.
\]
We can then write the above integrals using convolutions and combine terms,
\[
|\Omega \cup \Gamma|g(\bx,u(\bx))+4(\|\mu^2\|_1-\|\mu\|_1)u(\bx)-4(u*(\mu^2-\mu))(\bx)=0.
\]
Finally, using $\gamma$, $C$, and $M$ defined above, we obtain
\begin{align*}
\mathcal{L}_{\gamma}[u](\bx)=Cg(\bx,u(\bx)), \quad a.e. \ \bx \in \Omega \cup \Gamma.
\end{align*}

%{\color{red} If the choice of $G$ makes f jointly convex, we have existence of solutions to $\mathcal{L}[u]=g(x,u)$. Add assumptions to $G$ to satisfy growth conditions of EL equations.}
\end{proof}

%{\color{red}\begin{rem} Our proof showed that in some particular cases quasilinear problems one be transformed in semilinear problems with different kernels for the Laplacian. This situation is not encountered in the classical framework where one can not replace a differential operator with a different one to transform a quasilinear problem into a semilinear one.
%\end{rem}}

\section{Conclusions} 
To conclude, we have identified here two growth conditions for the Euler-Lagrange equations to hold in the nonlocal setting. We have shown existence of minimizers of energy functionals with {\it no upper growth assumptions}, but under convexity and coercivity hypothesis. Other existence results in this area limit the upper bound on the growth to a power-type growth, and this upper bound must match corresponding power-type lower bound. The flexibility for our integrand allows to have, for example, exponential, variable, or anisotropic growth. 

We have also shown regularity of solutions to semilinear problems with suitably chosen nonlinearities, under mild assumptions on the kernel. This case is of interest to us, as for the nonlinear setting, convergence of nonlocal solutions to classical counterparts when the horizon of interaction shrinks to zero, remains an open problem.  The argument of Theorem \ref{RoS} based on properties of convolutions will be utilized in future work, as it provides a roadmap to regularity results under very general assumptions.

Also, the paper includes applications of the results to several settings, including different nonlinearities (of quasilinear and semilinear structure), in both scalar, and vectorial settings.
Connections to classical setting will be studied in future work from a theoretical point of view, as well as from a numerical stance, by exhibiting convergence rates of nonlocal solutions to classical counterparts.

\end{document}